\newcommand{\cmark}{\ding{51}}
\newcommand{\om}{\omega}
\theoremstyle{plain}
\newtheorem{theorem}{Theorem}[section]
\newtheorem*{theorem*}{Theorem}
\newtheorem*{mt*}{Main Theorem}
\newcommand\g{{\mathfrak{g}}}
\newcommand\h{{\mathfrak{h}}}
\newcommand\m{{\mathfrak{m}}}
\begin{document}
\title[Conformally 
flat Pseudo-Riemannian Homogeneous Ricci Solitons 4-Spaces]{ Conformally 
flat Pseudo-Riemannian Homogeneous Ricci Solitons 4-Spaces }
\author{M. Chaichi and Y. Keshavarzi}
\date{}

\address{Department of Mathematics\\Payame noor University\\P.O. Box 19395-3697\\Tehran\\Iran.}
\email{Mohammad Chaichi: chaichi@pnu.ac.ir, Yadollah Keshavarzi: y.keshavarzi@pnu.ac.ir}
\subjclass[2000]{53C50, 53C15, 53C25}
\keywords{Homogeneous manifold, Ricci operator, Conformally flat space, Ricci soliton.}

\begin{abstract}
We consider four-dimensional conformally flat homogeneous pseudo-Riemannian manifolds.
 According to forms (Segre types) of the Ricci operator, we provide a full classification of four-dimensional 
pseudo-Riemannian conformally flat homogeneous Ricci solitons.
\end{abstract}

\maketitle

\section{Introduction}
\label{}
 A natural generalization of an Einstein manifold is Ricci soliton, i.e.   
a pseudo Riemannian metric $g$ on a smooth manifold $M$, such that the equation
\begin{eqnarray}
\begin{array}{cccc}
\mathcal{L}_{X} g =\varsigma g-\varrho,
\end{array}
\end{eqnarray}
holds for some $\varsigma \in R$ and some smooth vector field $X$ on $M$, where $\varrho$
denotes the Ricci tensor of (M, g) and $\mathcal{L}_{X}$ is the usual Lie derivative.
 According
to whether $\varsigma > 0, \varsigma = 0$ or $\varsigma < 0$ a Ricci soliton $g$ is said to be shrinking, steady or expanding, respectively. A homogeneous Ricci soliton on a homogeneous space $M = G/H$ is a G-invariant metric $g$ for which the equation (1.1) holds and
an invariant Ricci soliton is a homogeneous apace, such that equation (1.1) is satisfied by an invariant vector field.
Indeed, the study of Ricci solitons homogeneous spaces is an interesting area
of research in pseudo-Riemannian geometry.
For example, evolution of homogeneous Ricci solitons under the bracket flow \cite{Lm}, algebraic solitons and the Alekseevskii Conjecture properties\cite{LM}, conformally flat Lorentzian gradient Ricci
solitons\cite{MB}, properties of algebraic Ricci Solitons of three-dimensional Lorentzian
Lie groups\cite{BA}, algebraic Ricci solitons \cite{Ba}. Non-K\"{a}hler examples of Ricci solitons are very hard to find yet (see \cite{DH}).
Let $(G, g)$ be a simply-connected completely solvable Lie
group equipped with a left-invariant metric, and $(\g,\langle,\rangle )$ be the corresponding
metric Lie algebra. Then $(G, g)$ is a Ricci soliton if and only if $(\g,\langle ,\rangle)$ is an
algebraic Ricci soliton \cite{LJ}. 

 A pseudo-Riemannian manifold $\mathcal{M}=(M,g)$ is said to be homogeneous if the group $G$ of isometries acts transitively on $\mathcal{M}$. In this case, $(M,g)$ can be written as $(G/H,g)$, where $H$ is the isotropy group at a fixed point $\circ$ of $M$ and $g$ is an invariant pseudo-Riemannian metric.
Homogeneous manifolds have been used in several modern research in
pseudo-Riemannian geometry, for example, Lorentzian spaces for which all
null geodesics are homogeneous became relevant in physics \cite{Me},\cite{F} which following this fact, several studies on $g.o.$ spaces (that is, spaces whose geodesics
are all homogeneous) have done in last years (see \cite{Ca},\cite{Ca1},\cite{Ca2}). Homogeneous solutions to the Ricci flow have long been studied by many authors
(e.g., \cite{JI1},\cite{JI2},\cite{KI},\cite{LO},\cite{PP},\cite{TP},\cite{PB}).

The complete local classification of four-dimensional homogeneous pseudo-
Riemannian manifolds with non-trivial isotropy obtained \cite{KO},\cite{ko}. 
Using that the classification of four-dimensional conformally flat
at homogeneous pseudo-Riemannian manifolds $M = G/H$ was obtaind in \cite{Cal}.

Conformally flat spaces are the subject of many investigations in Riemannian and pseudo-Riemannian geometry. A conformally flat (locally) homogeneous Riemannian manifold is (locally) symmetric \cite{Ta}, and so, as proved in \cite{Ry}, it admits a universal covering either a spaces form $\mathbb R^n$, $\mathbb S^n(k)$, $\mathbb H^n(-k)$, or one of the Riemannian products $\mathbb R \times \mathbb S^{n-1}(k)$, $\mathbb R\times \mathbb H^{n-1}(-k)$ and $\mathbb S^{p}(k)\times \mathbb H^{n-p}(-k)$.

Riemannian locally conformally flat complete shrinking and steady gradient Ricci
solitons were recently classified \cite{C1},\cite{C2}.
Conformally flat Einstein pseudo-Riemannian manifolds have constant sectional curvature.
By the way, they are symmetric. Conformally flat homogeneous Riemannian
manifolds are always symmetric\cite{Ta} . On the other hand, some of our examples show the
existence of conformally flat homogeneous pseudo-Riemannian Ricci solitons which are
not symmetric.
The purpose of this paper is to investigate four-dimensional conformally flat homogeneous Ricci
solitons pseudo-Riemannian manifolds by focusing on the Segre types of their Ricci operator. First we give a details of local classification of four-dimensional conformally flat homogeneous pseudo-Riemannian manifolds obtained in \cite{Cal}, then following that we study and classify four-dimensional conformally flat homogeneous Ricci
solitons.

This paper is organized as follows. In Section 2, we recall 
some basic facts on Ricci
solitons , which play important roles in studying homogeneous Ricci solitons.
In Section 3, we report some necessary results on conformally flat homogeneous pseudo-Riemannian manifolds obtained in \cite{Cal}. In Section 4  we shall investigate several geometric properties of four-dimensional conformally flat
pseudo-Riemannian homogeneous Ricci solitons. 

 \section{perliminaries}

Let $M = G/H$ be a homogeneous manifold (with $H$ connected), $\g$ the Lie algebra
 of $G$ and $\h$ the isotropy subalgebra. Consider $\m =\g /\h$ the factor space, which identies
with a subspace of $\g$ complementary to $\h$. The pair $(\g,\h)$ uniquely defines the isotropy
representation
\begin{center}
$\psi :\g \longrightarrow \mathfrak{gl}(\m),\quad      \psi(x)(y)=[x,y]_\m$,   
\end{center}
for all  $x\in\g, y\in\m$. Suppose that $\lbrace e_1,...,e_r,u_1,...,u_n\rbrace$ be a basis of $\g$, where $\lbrace e_j\rbrace$ and $\lbrace u_i\rbrace$ are bases of $\h$ and $\m$
respectively, then with respect to $\lbrace u_i \rbrace$, $H_j$ whould be the isotropy representation  for $e_j$.
A bilinear form $B$ on $\m$ is invariant if and only if $\psi(x)^t\circ B+B\circ \psi(x)=0$, for all $x \in \h$, where
$\psi(x)^t$ denotes the transpose of $\psi(x)$. In particular, requiring that $B=g$ is symmetric and
nondegenerate, this leads to the classification of all invariant pseudo-Riemannian metrics
on $G/H$.

 Following the notation given in \cite{Cal}, $g$ on $\m$ uniquely defines its invariant linear Levi-Civita connection, as the corresponding homomorphism of $\h$-modules  $\Lambda:\g \longrightarrow \mathfrak{gl}(\m)$ such that $\Lambda(x)(y_\m)=[x,y]_\m$ for all  $x\in \h, y\in \g$. In other word
\begin{eqnarray}
\Lambda(x)(y_\m)=\frac{1}{2}[x,y]_\m+v(x,y),
\end{eqnarray}
for all  $x,y\in \g$, where $v:\g \times \g\rightarrow \m$ is the $\h$-invariant symmetric mapping uniquely determined by
\begin{center}
$2g(v(x, y), z_\m) = g(x_\m, [z,y]_\m) + g(y_\m,[z,x]_\m)$,
\end{center}
for all  $x,y,z\in \g$,
Then the curvature tensor can be determined by
\begin{eqnarray}
R:\m \times \m \longrightarrow \mathfrak{gl}(\m),\quad  R(x,y)=[\Lambda(x),\Lambda(y)]-\Lambda ([x,y]),   
\end{eqnarray}
and with respect to $u_i$, the Ricci tensor $\varrho$ of $g$  is given by
\begin{eqnarray}
\varrho (u_i ,u_j)=\sum_{k=1}^4g(R(u_k,u_i)u_j,u_k),\quad i,j=1,\dots,4.
\end{eqnarray}
Furthermore, whenever $X = \sum_{k=1}^4x_k e_k $ the Equation (1.1) becomes
$$
\begin{array}{cr}
\sum_{k=1}^4x_k(g([u_k,u_i],u_j)+g(u_i,[u_k,u_j]))+\varrho(u_i,u_j)=\varsigma g_{ij}, \quad \quad i,j=1,...,4.
\end{array}
$$
Moreover, the Equation (2.2) characterizes conformally
flat pseudo-Riemannian manifolds of dimension
$n\geq4$, while it is trivially satisfied by any three-dimensional manifold. Conversely,
the condition
\begin{eqnarray}
\begin{array}{cr}
\nabla_{i} \varrho_{jk}-\nabla_{j} \varrho_{ik}=\frac{1}{2(n-2)}(g_{jk}\nabla_{i}\tau-g_{ik}\nabla_{j}\tau),
\end{array}
\end{eqnarray}
which characterizes three-dimensional conformally flat spaces, is trivially satisfied
by any conformally flat Riemannian manifold of dimension greater than three.

 Following the notation and the classifcation used in \cite{ko}, the space identified by the
type $n.m^k:q$ is the one corresponding to the $q$-th pair $(\g,\h)$ of type $n.m^k$, where $n={\rm dim}(\h)$
$(= 1,..., 6)$, $m$ is the number of the complex subalgebra $\h^{\mathbb C}$ of ${\mathfrak so}(4,\mathbb C)$ and $k$ is the number
of the real form of $\h^{\mathbb C}$. When the index $q$ is removed, we refer simultaneously to all
homogeneous spaces corresponding to pairs $(\g, \h)$ of type $n.m^k$.

We now recall the possible Segre types of the Ricci operator for a conformally flat homogeneous four-dimensional manifold through the following tables \cite{Cal}.
\begin{center}
{\small
\medskip
{\bf Table 1: Segre types of $Q$ for an inner product of signature $(2,2)$.}\nopagebreak \\[3 pt]
\begin{tabular}{|p{3.5cm}|p{2cm}|p{1.5cm}|p{1.5cm}|p{1.5cm}|p{1.5cm}|}
\hline
Case&Ia&Ib&Ic&IIa&IIb\\
\hline
Non degenerate type &---& $[1,11\bar 1]$& $[1,\bar {1}1\bar 1]$ &---& [22]\\
\hline
Degenerate type & $[(11),(11)]\newline$ $[(1|1,1|1)]\newline$ $[(11,1)1)]\newline$ $[1(1,11))]\newline$ $[(11,11)]$ & $[(1,1)1\bar 1]$& $[(1,\bar {1}1\bar 1)]$ & $[(1,1)2]\newline$ $[1,(12)]\newline$ $[(1,12)]\newline$& [(22)]\\
\hline
Case & IId & IIc & IIIa & IIIb & IV\\
\hline
Non degenerate type & $[21\bar 1]$ & $[2\bar 2]$ & [13] & [1,3] & [4]\\
\hline
Degenerate types &---&---& [(13)] & [(1,3)] &---\\
\hline
\end{tabular}
}\\

{\small
\medskip
{\bf Table 2: Segre types of $Q$ for a Lorentzian inner product.}\nopagebreak \\[3 pt]
\begin{tabular}{|p{3.5cm}|p{2cm}|p{1.5cm}|p{1.5cm}|p{1.5cm}|}
\hline
Case & Ia & Ib & II & III \\
\hline
Non degenerate type &---& $[11,1\bar 1]$& --- & [1,3]\\
\hline
Degenerate type & $[(11),(1,1)]\newline$ $[1(11,1)]\newline$ $[(111)1)]\newline$ $[(111,1)]$ &  $[(11),1\bar 1]$ & $[(11),2]\newline$ $[1(1,2)]\newline$ $[(11,2)]$& [(1,3)]\\
\hline
\end{tabular}
}\\
\end{center}

\begin{theorem}\cite{Ho}\label{CFdiag}
Let $M_q^n$ be an $n(\geq 3)$-dimensional conformally flat homogeneous pseudo-Riemannian manifold with diagonalizable Ricci operator. Then, $M_q^n$ is locally isometric to one of the following:
\begin{itemize}
\item[(i)] A pseudo-Riemannian space form;
\item[(ii)] A product manifold of an $m$-dimensional space form of constant curvature $k \neq 0$ and an $(n-m)$-dimensional pseudo-Riemannian manifold of constant curvature $-k$, where $2\leq m\leq n-2$;
\item[(iii)] A product manifold of an $(n-1)$-dimensional pseudo-Riemannian manifold of index $q - 1$ of constant curvature $k\neq 0$ and a one-dimensional Lorentzian manifold, or a product of an $(n-1)$-dimensional pseudo-Riemannian manifold of index $q$ of constant curvature $k \neq 0$ and a one-dimensional Riemannian
manifold.
\end{itemize}
\end{theorem}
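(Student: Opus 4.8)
The plan is to exploit that conformal flatness makes the Schouten (equivalently, Ricci) tensor a Codazzi tensor, that homogeneity makes its spectrum constant, to conclude from these two facts that the Ricci eigendistributions are parallel, apply a local de Rham decomposition, and finally read off each factor from the conformal-flatness curvature identity. Concretely, I would first record the Codazzi property: for $n\ge 4$ the vanishing of the Weyl tensor together with the contracted second Bianchi identity forces the Cotton tensor to vanish, and for $n=3$ the vanishing of the Cotton tensor is exactly the three-dimensional conformal-flatness condition; in both cases this means the Schouten tensor $S=\frac{1}{n-2}\big(\varrho-\frac{\tau}{2(n-1)}g\big)$ satisfies $(\nabla_X S)(Y,Z)=(\nabla_Y S)(X,Z)$. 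Since $M$ is homogeneous, $\tau$ is constant and the Ricci operator $Q=\varrho^{\sharp}$ has constant eigenvalues $\lambda_1,\dots,\lambda_s$; as $Q$ is $g$-self-adjoint and, by hypothesis, diagonalizable over $\mathbb{R}$, its eigenspaces $V_1,\dots,V_s$ are pairwise $g$-orthogonal, nondegenerate, and form smooth distributions of constant ranks $m_i=\dim V_i$ with $\sum m_i=n$; these are also the (constant-eigenvalue) eigendistributions of $S$.

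Next I would invoke the structure theory of Codazzi tensors (Derdzinski, Derdzinski--Shen): the eigendistribution of a Codazzi tensor associated with a \emph{constant} eigenvalue is $\nabla$-parallel. Hence each $V_i$ is parallel; being in addition nondegenerate, the local de Rham--Wu splitting theorem applies, and $M$ is locally isometric to a product $M_1\times\cdots\times M_s$ with $TM_i=V_i$ and $\dim M_i=m_i$.

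Then I would identify the factors from the conformal-flatness curvature formula $R_{abcd}=g_{ac}S_{bd}-g_{ad}S_{bc}+g_{bd}S_{ac}-g_{bc}S_{ad}$, using that on $V_i$ one has $S=\sigma_i g$ with $\sigma_i=\frac{1}{n-2}\big(\lambda_i-\frac{\tau}{2(n-1)}\big)$ constant, and that for a product the curvature is block-diagonal. For vectors in a single $V_i$ this gives $R^{M_i}_{abcd}=2\sigma_i\,(g_{ac}g_{bd}-g_{ad}g_{bc})$, so $M_i$ has constant curvature $2\sigma_i$ (a vacuous condition when $m_i=1$, in which case moreover $\lambda_i=0$ since $\varrho^{M_i}=0$); for $X\in V_i$, $Y\in V_j$ with $i\ne j$, evaluating on $(X,Y,X,Y)$ and using $g(X,Y)=S(X,Y)=0$ yields $g(X,X)g(Y,Y)(\sigma_i+\sigma_j)=0$, hence $\sigma_i+\sigma_j=0$ for all $i\ne j$ after choosing $X,Y$ non-null (possible since the $V_i$ are nondegenerate). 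A case analysis then finishes it: $s=1$ gives constant curvature, case (i); $s\ge 3$ forces all $\sigma_i=0$, so $M$ is flat, again (i); and for $s=2$ one has $\sigma_2=-\sigma_1$, with $\sigma_1=0$ flat (case (i)), with $\sigma_1\ne 0$ and $m_1,m_2\ge 2$ a product of a space form of curvature $k=2\sigma_1\ne 0$ and one of curvature $-k$ with $2\le m_1\le n-2$ (case (ii)), and with one factor one-dimensional the product of an $(n-1)$-dimensional manifold of constant curvature $k\ne 0$ with a definite line, the splitting $q=\operatorname{ind}M_1+\operatorname{ind}M_2$ producing exactly the two alternatives of (iii).

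I expect the main obstacle to be the second step: establishing that the Ricci eigendistributions are parallel. The Codazzi identity only delivers this once the eigenvalues are known to be constant — which is precisely where homogeneity is essential — and one must either invoke the structure theory of Codazzi tensors or reprove the needed fact directly, e.g. by combining the Codazzi identity with the abundance of local Killing fields; one must also take care to use the form of the de Rham decomposition valid in the pseudo-Riemannian setting, which is legitimate here exactly because the eigenspaces are nondegenerate. Everything else reduces to routine manipulation of the conformal-flatness curvature identity and bookkeeping of dimensions and signature.
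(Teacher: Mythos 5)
Your overall strategy (Schouten tensor is Codazzi, homogeneity gives constant eigenvalues, split along parallel eigendistributions, then read off the factors from $R=g\owedge S$) is the natural one, and the later steps — orthogonality and nondegeneracy of the eigenspaces, the de Rham--Wu splitting once parallelism is known, and the computation $\sigma_i+\sigma_j=0$ identifying the factors — are sound. But the pivotal step, which you yourself flag as the main obstacle, is not covered by the result you invoke: it is \emph{false} in general that the eigendistributions of a Codazzi tensor with constant eigenvalues are $\nabla$-parallel. What the Codazzi identity plus constancy of the eigenvalues gives (by total symmetry of $\nabla S$) is only that every component of $\nabla S$ with two arguments in the same eigendistribution vanishes; hence each $V_i$ is totally geodesic and $g(\nabla_XY,Z)=0$ unless $X,Y,Z$ lie in three pairwise distinct eigenspaces. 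If there are exactly two distinct eigenvalues this forces $\nabla S=0$ by pigeonhole and your argument goes through; with three or more distinct eigenvalues the ``totally mixed'' connection components are only constrained by the cyclic relations $(\lambda_i-\lambda_j)\,g(\nabla_{X_k}Y_i,Z_j)=\dots$ and need not vanish. A concrete counterexample to the lemma you cite: on $SU(2)$ with the left-invariant metric whose Milnor structure constants are $(c_1,c_2,c_3)=(1,1,4)$, the left-invariant tensor $A=\mathrm{diag}(0,2,1)$ in the Milnor frame is Codazzi with constant distinct eigenvalues, yet $\nabla A\neq 0$ and no eigendistribution is parallel. The Derdzinski--Shen theory yields integrability, umbilicity and curvature constraints, not parallelism of constant-eigenvalue eigendistributions.

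Consequently your proof has a genuine gap precisely in the case of three or more distinct Ricci eigenvalues: you cannot invoke the splitting to deduce $\sigma_i+\sigma_j=0$ (and hence flatness) there, because the splitting is what is at stake, and the conformally flat curvature identity gives no pointwise obstruction to $s\geq 3$ (all curvature components mixing three distinct eigenspaces vanish automatically when $R=g\owedge S$). What must be proved — and this is where the real content of the cited result of Honda--Tsukada (and of Takagi in the Riemannian case) lies — is that a conformally flat homogeneous pseudo-Riemannian manifold with diagonalizable Ricci operator has at most two distinct Ricci eigenvalues, or equivalently that its Ricci tensor is parallel; this requires using the explicit curvature expression together with homogeneity (e.g.\ computing mixed components such as $R(X,Y,Y,Z)$ for eigenvectors in three distinct eigenspaces and exploiting the resulting differential relations), not merely the Codazzi property. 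Note also that the paper itself offers no proof of this statement; it is quoted from the reference [Ho], so the burden of this step cannot be discharged by appeal to the surrounding text either.
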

It is obvious from the last theorem that if $(M,g)$ have digonalizable Ricci operator then the Ricci operator is degenerate. So the study of cases with nondegenerate Ricci operator restricts to the non-diagonalizable ones.

\section{Cases with nondegenerate Ricci operator}

As we mentioned befor if $(M,g)$ have digonalizable Ricci operator then the Ricci operator is degenerate \cite{Cal}. So let $(M,g)$ be a conformally flat homogeneous four dimensional manifold with nondegenerate Ricci operator.
For any point $p\in M$, we have that $g(0, p)=\{ 0\}$  if and only if $ Q_{p} $ is
nondegenerate. Therefore, $(M,g)$ is locally isometric to a Lie group equipped with a left-invariant pseudo-Riemannian metric and the Ricci operator of conformally flat homogeneous pseudo-Riemannian four-manifolds can only be of Segre type $[1,11\bar{1}]$ if $g$ is neutral, or $[11,1\bar{1}]$ if $g$ is Lorentzian \cite{Cal}. We report here the Lie group structure of the mentioned types and their Ricci tensor as follow.
\begin{theorem}\cite{Cal}\label{ND-NU}
Let $(M,g)$ be a conformally flat homogeneous four-dimensional manifold with the Ricci operator of Segre type $[1,11\bar1]$. Then, $(M,g)$ is locally isometric to {one of the unsolvable Lie groups $SU(2) \times \mathbb R$ or $SL(2,\mathbb R) \times \mathbb R$}, equipped with a left-invariant neutral metric, admitting a pseudo-orthonormal basis  $\{e_1,e_2,e_3,e_4\}$ for their Lie algebra, 
such that the Lie brackets take one of the following forms:
$$
\begin{array}{llll}
{\rm i)} \quad & [e_1,e_2]=\varepsilon\alpha e_3,\quad &[e_1,e_3]=-\varepsilon\alpha e_2,\quad & [e_2,e_3]=2\alpha(e_1+\varepsilon e_4),\\[2pt] &
[e_2,e_4]=-\alpha e_3,\quad & [e_3,e_4]=\alpha e_2, &
\end{array}
$$
$$
\begin{array}{llll}
{\rm ii)} \quad & [e_1,e_2]=-\varepsilon\alpha e_1,\quad &[e_1,e_3]=\alpha e_1,\quad & [e_1,e_4]=2\alpha(\varepsilon e_2-e_3),\\[2pt] &
[e_2,e_4]=-\varepsilon\alpha e_4,\quad & [e_3,e_4]=\alpha e_4,&
\end{array}
$$
\end{theorem}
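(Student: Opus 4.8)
The plan is to start from the known classification of conformally flat homogeneous four-dimensional pseudo-Riemannian manifolds with non-trivial isotropy (the types $n.m^k:q$ of \cite{ko}), and to extract precisely those whose Ricci operator is non-degenerate. By Theorem~\ref{CFdiag}, any such space with \emph{diagonalizable} Ricci operator is a product of space forms, whence its Ricci operator is degenerate; this forces the Ricci operator in the non-degenerate case to be non-diagonalizable. Running through the possible Segre types recorded in Table~1 and Table~2, the non-diagonalizable non-degenerate types are eliminated one by one (the Jordan-block types $[22]$, $[2\bar2]$, $[21\bar1]$, $[13]$, $[1,3]$, $[4]$ in the neutral case, and $[1,3]$ in the Lorentzian case) using the conformal-flatness constraint together with homogeneity — here the algebraic fact that the characteristic polynomial of $Q$ must have constant coefficients along $M$, combined with the second Bianchi-type identity (2.2), rules out all but one type in each signature. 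What remains is exactly Segre type $[1,11\bar1]$ when $g$ is neutral and $[11,1\bar1]$ when $g$ is Lorentzian, as asserted.

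The second half of the argument is to realize the surviving spaces concretely. For Segre type $[1,11\bar1]$, the non-degeneracy of $Q_p$ means the isotropy is trivial, so $(M,g)$ is locally a Lie group with a left-invariant neutral metric; we then go to the Lie-algebra level. Picking a pseudo-orthonormal basis $\{e_1,e_2,e_3,e_4\}$ adapted to $Q$ (with the two-dimensional eigenspace for one eigenvalue and the Lorentzian $2$-plane $[1\bar1]$ handled separately), I would write the Levi-Civita connection via the formula $\Lambda(x)(y_\m)=\tfrac12[x,y]_\m+v(x,y)$ from Section~2, impose that the curvature tensor (2.2) be that of a conformally flat space, and impose that the Ricci operator has the prescribed eigenvalue pattern. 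Solving the resulting polynomial system in the structure constants pins down the brackets up to the discrete parameter $\varepsilon=\pm1$ and a scale $\alpha$, producing the two families i) and ii); one then checks that family i) has Lie algebra $\mathfrak{su}(2)\oplus\mathbb R$ (or $\mathfrak{sl}(2,\mathbb R)\oplus\mathbb R$ depending on the sign of $\alpha$, absorbed into $\varepsilon$) and family ii) is the other, by computing the Killing form / the derived algebra and recognizing the three-dimensional simple summand.

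The main obstacle I expect is the bookkeeping in the Segre-type elimination: one must carefully use that, for a homogeneous space, the minimal polynomial of $Q$ and the dimensions of the generalized eigenspaces are constant, and then combine this with the rather rigid system coming from $W=0$ (conformal flatness) for each candidate Jordan form. The Jordan-block cases are delicate because the null directions in the neutral/Lorentzian setting allow nilpotent pieces that are invisible in the Riemannian case, so the exclusion of, say, $[22]$ and $[2\bar2]$ requires genuinely using the curvature identity rather than a naive eigenvalue count. A secondary but purely computational difficulty is the explicit solution of the structure-constant equations: the Jacobi identity together with conformal flatness yields an over-determined system whose only solutions are the stated ones, and verifying that these are exhaustive (no spurious extra branch) is where most of the routine labor lies. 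Once that is done, identifying the abstract Lie algebras as $\mathfrak{su}(2)\oplus\mathbb R$ and $\mathfrak{sl}(2,\mathbb R)\oplus\mathbb R$ is immediate from the structure constants. An entirely parallel computation, which I would only sketch, handles the Lorentzian case $[11,1\bar1]$.
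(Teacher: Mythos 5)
First, note that the paper does not actually prove this statement: it is quoted verbatim from \cite{Cal}, so the only thing to compare your proposal against is the reduction sketched around it in Section~3 and the strategy of that reference. Measured against that, your skeleton (reduce to trivial isotropy, hence to a Lie group with a left-invariant metric, then solve for the structure constants) is the right one, but two of your load-bearing claims are false as stated and one is unsupported. The claim ``diagonalizable Ricci operator $\Rightarrow$ degenerate Ricci operator'' read off from Theorem~\ref{CFdiag} is wrong: a space form of nonzero curvature, or a product $M^2(k)\times M^2(-k)$ from item (ii), is conformally flat, homogeneous, and has diagonalizable \emph{nondegenerate} $Q$ (these cases are harmless only because they are symmetric, which is a different argument). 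Likewise ``nondegeneracy of $Q_p$ means the isotropy is trivial'' fails for the same examples, which have large isotropy. What actually forces trivial isotropy here is the hypothesis of the theorem itself, the Segre type $[1,11\bar1]$: since $Q$ is $g$-self-adjoint, a non-real eigenvalue pair forces its invariant $2$-plane to have signature $(1,1)$, and then any $g$-skew-symmetric endomorphism commuting with $Q$ kills the two non-null eigenlines and vanishes on that plane, so the isotropy representation is trivial. You neither state nor prove this, yet it is the hinge of the whole reduction; and your opening plan of sifting Komrakov's non-trivial-isotropy list \cite{ko} for nondegenerate $Q$ is inconsistent with it (that list cannot produce these spaces) and is in any case unnecessary, because the Segre type is a hypothesis, not something to be derived, so your elimination of $[22]$, $[2\bar2]$, $[21\bar1]$, etc.\ via ``constant characteristic polynomial plus the Bianchi-type identity (2.2)'' addresses a different statement — and (2.2) in this paper is the curvature formula, not a Bianchi identity or a conformal-flatness condition.

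The second, equally serious gap is that the entire content of the theorem — that a four-dimensional Lie group with a left-invariant neutral conformally flat metric whose Ricci operator has type $[1,11\bar1]$ admits a pseudo-orthonormal basis in which the brackets are exactly family i) or ii), with no other branches — is dispatched as ``solving the resulting polynomial system''. You give no normal form for $Q$ and the structure constants, no use of the classification of four-dimensional Lie algebras or of the (anti-)self-dual decomposition of the Weyl tensor, and no argument for exhaustiveness, which is precisely where the proof lives in \cite{Cal}. (A small further slip: in family i) the dichotomy $SU(2)\times\mathbb R$ versus $SL(2,\mathbb R)\times\mathbb R$ is governed by $\varepsilon$, not by the sign of $\alpha$ — changing the sign of $\alpha$ rescales all three structure constants of the simple factor and does not change its isomorphism type — while family ii) is always $\mathfrak{sl}(2,\mathbb R)\oplus\mathbb R$.) As it stands, the proposal reproduces the paper's citation-level outline, including its inaccuracies, but does not supply the two steps that would constitute a proof: the isotropy-triviality argument tied to the Segre type, and the actual determination of the structure constants.
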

and the Ricci tensor in case (i) is given by
 $$
\varrho= \left( \begin{array}{cccc}
   -2\alpha^2+2\epsilon^2\alpha^2 & 0 & 0 & 4\epsilon \alpha^2  \\
   0 & 2\alpha^2+4\epsilon \alpha^2-2\epsilon^2\alpha^2 & 0 & 0  \\
   0 & 0 & 4\epsilon \alpha^2-2 \alpha^2+2 \epsilon^2 \alpha^2 & 0  \\
   4\epsilon \alpha^2 & 0 & 0 &2\alpha^2-2\epsilon^2\alpha^2
 \end{array}  \right),
$$
also the Ricci tensor in case (ii) is then given by
 \begin{eqnarray}
\varrho= \left( \begin{array}{cccc}
  2\epsilon^2\alpha^2-2\alpha^2 & 0 & 0 & -2\epsilon^2\alpha^2-2\alpha^2  \\
   0 & -4\epsilon^2\alpha^2 & 0 & 0  \\
   0 & 0 & -4\alpha^2 & 0  \\
   -2\epsilon^2\alpha^2-2\alpha^2 & 0 & 0 & -2\epsilon^2\alpha^2+2\alpha^2
 \end{array}  \right),
\end{eqnarray}
where $\alpha\neq 0$ is a real constant and $\varepsilon=\pm1$.

\begin{theorem}\cite{Cal}\label{ND-LO} Let $(M,g)$ be a conformally flat homogeneous Lorentzian four-manifold with the Ricci operator of Segre type $[11,1\bar1]$. Then, $(M,g)$ is locally isometric to one of the unsolvable Lie groups $SU(2) \times \mathbb R$ or $SL(2,\mathbb R) \times \mathbb R$, equipped with a left invariant Lorentzian metric, admitting a pseudo-orthonormal basis  $\{e_1,e_2,e_3,e_4\}$ for the Lie algebra, such that the Lie brackets take one of the following forms:
$$
\begin{array}{llll}
{\rm i)} \quad & [e_1,e_2]=-2\alpha(\varepsilon e_3+e_4),\quad&[e_1,e_3]=\varepsilon\alpha e_2,\quad& [e_1,e_4]=\alpha e_2,\\[2pt] &
[e_2,e_3]=\varepsilon\alpha e_1,\quad & [e_2,e_4]=\alpha e_1,
\end{array}
$$
$$
\begin{array}{llll}
{\rm ii)} \quad & [e_1,e_2]=2\alpha(\varepsilon e_3+e_4),\quad&[e_1,e_3]=\varepsilon\alpha e_2,\quad& [e_1,e_4]=\alpha e_2,\\[2pt]
& [e_2,e_3]=\varepsilon\alpha e_1,\quad & [e_2,e_4]=\alpha e_1,
\end{array}
$$
\end{theorem}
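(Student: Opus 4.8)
The plan is to convert the statement into a classification of four-dimensional Lorentzian metric Lie algebras and then to solve an explicit polynomial system in the structure constants, in the same spirit as the neutral-signature case of Theorem~\ref{ND-NU}.

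First I would use the remarks preceding the statement: in the nondegenerate case $(M,g)$ is locally isometric to a Lie group $G$ with a left-invariant Lorentzian metric, and, since a real-diagonalizable Ricci operator is excluded in this case by Theorem~\ref{CFdiag}, the Ricci operator $Q$ must be of Segre type $[11,1\bar1]$. So it suffices to determine all four-dimensional Lorentzian metric Lie algebras $(\g,\langle\,,\rangle)$ that are conformally flat with $Q$ of this type. I would then fix a pseudo-orthonormal basis $\{e_1,e_2,e_3,e_4\}$ adapted to the normal form of $Q$: the operator preserves a spacelike plane $\mathrm{span}\{e_1,e_2\}$, on which it is real symmetric, and a Lorentzian plane $\mathrm{span}\{e_3,e_4\}$, on which it has a complex-conjugate pair of eigenvalues $a\pm b\,i$ with $b\neq 0$.

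Next, writing $[e_i,e_j]=\sum_k c_{ij}^k e_k$, the formulas recalled in Section~2 give the left-invariant Levi-Civita connection $\Lambda$, the curvature $R(x,y)=[\Lambda(x),\Lambda(y)]-\Lambda([x,y])$ and the Ricci tensor $\varrho$ as polynomials in the $c_{ij}^k$. Since $\dim\g=4$, conformal flatness is the vanishing of the Weyl tensor, i.e.\ $R$ must equal the universal algebraic expression in $\varrho$ and the scalar curvature; I would equate this with the curvature computed from the $c_{ij}^k$ and adjoin the Jacobi identities together with the requirement that $\varrho$ sit in the chosen normal form. This yields a finite polynomial system in the $c_{ij}^k$, in $a,b$ and in the two eigenvalues on $\mathrm{span}\{e_1,e_2\}$. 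Before attacking it I would exploit the residual freedom---the stabilizer of the normal form of $Q$ inside $O(1,3)$, which is essentially the $O(2)$ rotating $e_1,e_2$ (when those eigenvalues coincide) together with finitely many reflections---to normalize several of the $c_{ij}^k$.

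One then runs through the finitely many cases according to which of the surviving structure constants vanish; in each branch the system should, after elimination, collapse to a one-parameter family, and I expect these to be precisely the families (i) and (ii), with $\alpha\neq 0$ the residual scale (not removable since the metric is already normalized) and $\varepsilon=\pm1$ a discrete sign. For each family I would check directly that the Jacobi identity holds and that $Q$ is genuinely of type $[11,1\bar1]$, so that no branch is a spurious degeneration. Finally, computing $[\g,\g]$ shows it is a three-dimensional simple Lie algebra; it then follows that $\g\cong[\g,\g]\oplus\R$ automatically, with $[\g,\g]$ equal to $\mathfrak{su}(2)$ or $\mathfrak{sl}(2,\R)$ according as its Killing form is definite or indefinite, so $(M,g)$ is locally isometric to $SU(2)\times\R$ or $SL(2,\R)\times\R$, and in particular $G$ is unsolvable. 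The hard part will be the elimination and the case analysis: the polynomial system is sizeable, and the real work is to organize the gauge-fixing and the casework---which $c_{ij}^k$ vanish, the sign choices, keeping the metric Lorentzian---so that it terminates cleanly, in practice with the help of computer algebra; the reduction at the start and the identification at the end are then routine.
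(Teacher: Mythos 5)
Your proposal cannot be checked against an internal argument, because this paper does not actually prove Theorem~\ref{ND-LO}: the statement (and the accompanying Ricci tensors) is imported verbatim from \cite{Cal}, and the surrounding text only reports it. Judged on its own, your plan is the natural computational strategy and is very likely close in spirit to the source: reduce to a left-invariant Lorentzian metric on a Lie group, write the connection, curvature and Ricci tensor as polynomials in the structure constants via (2.1)--(2.3), impose vanishing of the Weyl tensor, the Jacobi identity, and the normal form of a self-adjoint operator of Segre type $[11,1\bar1]$, quotient by the stabilizer of that normal form, and identify the resulting algebras. Your final identification step is also sound: a three-dimensional simple derived algebra plus a one-dimensional radical, on which the Levi factor must act trivially, gives $\mathfrak{g}\cong\mathfrak{su}(2)\oplus\R$ or $\mathfrak{sl}(2,\R)\oplus\R$, hence the stated groups.

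The genuine gap is that the core of the theorem is never argued: you set up the polynomial system and then state that you ``expect'' each branch of the case analysis to collapse to exactly the families (i) and (ii), deferring the elimination to computer algebra. For a classification result this elimination \emph{is} the content; nothing in the proposal rules out further families, shows that the brackets can be brought to precisely the displayed form after gauge fixing, or verifies that each surviving family is conformally flat with $Q$ genuinely of type $[11,1\bar1]$ (you list this as a check to be done, not one that is done). Two secondary points: the reduction ``nondegenerate Ricci operator $\Rightarrow$ locally a Lie group with a left-invariant metric'' is itself a nontrivial ingredient of \cite{Cal} that you take from the remarks preceding the statement rather than establish; and ``nondegenerate'' here refers to the Segre type, so in type $[11,1\bar1]$ the two real eigenvalues are distinct --- the $O(2)$ rotating $e_1,e_2$ that you invoke ``when those eigenvalues coincide'' would place you in the excluded degenerate type $[(11),1\bar1]$, so the residual gauge freedom available to you is only a finite group, and the normalization of structure constants has to be organized accordingly.
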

and the Ricci tensor in case (i) is given by
 $$
\varrho= \left( \begin{array}{cccc}
  4\alpha^2 & 0 & 0 & 0  \\
   0 & -4\epsilon^2\alpha^2 & 0 & 0  \\
   0 & 0 & 0 &-4\alpha^2 \epsilon  \\
  0 & 0 & -4\alpha^2\epsilon & 0
 \end{array}  \right),
$$
also the Ricci tensor in case (ii) is then given by
 $$
\varrho= \left( \begin{array}{cccc}
 - 4\alpha^2  \epsilon^2 & 0 & 0 & 0  \\
   0 & 4\alpha^2 & 0 & 0  \\
   0 & 0 & 0 &-4\alpha^2 \epsilon  \\
  0 & 0 & -4\alpha^2\epsilon & 0
 \end{array}  \right),
$$
where $\alpha\neq 0$ is a real constant and $\varepsilon=\pm1$.

Now using the above classification statements we classify conformally flat homogeneous Ricci soliton four dimensional manifolds with nondegenerate Ricci operator. The result is the following theorem.
\begin{theorem}\label{nondegWalker}
Let $(M,g)$ be a conformally flat homogeneous four dimensional manifold with nondegenerate Ricci operator. Then $(M,g)$ can not be a Ricci soliton manifold.
\end{theorem}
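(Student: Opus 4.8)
The plan is to exploit the explicit classification in Theorems~\ref{ND-NU} and~\ref{ND-LO}: a conformally flat homogeneous four-dimensional manifold with nondegenerate Ricci operator is, up to local isometry, one of exactly four metric Lie algebras --- two of neutral signature (Segre type $[1,11\bar 1]$) and two Lorentzian (Segre type $[11,1\bar 1]$) --- each presented by explicit Lie brackets and an explicit Ricci tensor in a pseudo-orthonormal basis $\{e_1,e_2,e_3,e_4\}$. Since the isotropy is trivial in all four cases, $(M,g)$ is locally a Lie group with a left-invariant metric, so it suffices to contradict the Ricci soliton equation tested against a left-invariant vector field $X=\sum_{k=1}^{4}x_k e_k$; by the formula recalled in Section~2 this reads
$$
\sum_{k=1}^4 x_k\bigl(g([e_k,e_i],e_j)+g(e_i,[e_k,e_j])\bigr)+\varrho(e_i,e_j)=\varsigma\,g_{ij},\qquad i,j=1,\dots,4 .
$$

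The main idea is that in each case a single off-diagonal component of this system is already inconsistent, so that the full $4\times 4$ linear system in $x_1,\dots,x_4,\varsigma$ need not be solved. For the two neutral cases I would look at the pair $(a,b)=(1,4)$, and for the two Lorentzian cases at $(a,b)=(3,4)$. In every case $g_{ab}=0$ since the basis is pseudo-orthonormal, while the Ricci tensors displayed after Theorems~\ref{ND-NU} and~\ref{ND-LO} give $\varrho(e_a,e_b)=\pm 4\alpha^{2}\neq 0$ (using $\alpha\neq 0$ and $\varepsilon^{2}=1$). A short inspection of the structure constants shows that, for every $k$, the bracket $[e_k,e_a]$ has no $e_b$-component and $[e_k,e_b]$ has no $e_a$-component; hence $g([e_k,e_a],e_b)+g(e_a,[e_k,e_b])=0$ for all $k$, and the left-hand side of the $(a,b)$-equation collapses to $\varrho(e_a,e_b)$. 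The equation then forces $\varrho(e_a,e_b)=\varsigma\, g_{ab}=0$, a contradiction. Thus no $X$ and no $\varsigma$ satisfy the system, and $(M,g)$ is not a Ricci soliton.

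Carrying this out amounts to: reading off the four bracket systems and the four Ricci tensors from Theorems~\ref{ND-NU} and~\ref{ND-LO}; verifying, for the indicated pair $(a,b)$ in each case, the two ``missing-component'' statements for all $k$ and for both signs $\varepsilon=\pm 1$; and recording that $\varrho(e_a,e_b)\neq 0$ there. The step I expect to be the only delicate one is choosing the slot $(a,b)$ correctly and confirming that the relevant structure constants vanish uniformly in $\varepsilon$ --- this is exactly what lets one scalar equation finish the argument. I would also remark that, to exclude non-invariant soliton vector fields as well, one needs in addition that on these (non-solvable) Lie groups a homogeneous Ricci soliton structure may be taken invariant; within the framework of Section~2, the displayed equation is precisely the identity being contradicted.
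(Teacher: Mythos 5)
Your proposal is correct and takes essentially the same route as the paper: one tests the soliton equation against an invariant field $X=\sum_k x_k e_k$ and reads off a single inconsistent component, and in the neutral case (ii) your $(1,4)$-slot equation is exactly the paper's third equation $-2\epsilon^2\alpha^2-2\alpha^2=0$, which forces $\alpha=0$. The difference is only presentational: the paper writes the full system explicitly for one of the four bracket forms and asserts the rest, while you isolate the decisive off-diagonal slot ($(1,4)$ neutral, $(3,4)$ Lorentzian) uniformly in all four cases.
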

\begin{proof}
According to the above argument for signature $(2,2)$ and Lorentzian signature, we have the explicit description of Lie groups and their Lie algebras. We report the
calculations for the case $(ii)$ of signature $(2,2)$ with $\varepsilon=\pm1$. 
Using (2.2) to compute $\Lambda_i:=\Lambda(e_i)$ for all indices i = 1, .., 4, so describing the
Levi-Civita connection of g. We get
$$
 \Lambda_1= \left( \begin{array}{cccc}
   0 &-\epsilon \alpha & \alpha & 0  \\
    \epsilon \alpha & 0 &0 & \epsilon \alpha  \\
 \alpha & 0 & 0 & -\alpha  \\
   0 & \epsilon \alpha & \alpha & 0
 \end{array}  \right),\quad\quad
  \Lambda_2= \left( \begin{array}{cccc}
   0 & 0 & 0 &\epsilon \alpha  \\
   0& 0 &0 &0   \\
   0 &0 & 0 & 0  \\
   \epsilon \alpha & 0 & 0 & 0
 \end{array}  \right),
$$
$$ 
 \Lambda_3= \left( \begin{array}{cccc}
   0 & 0 & 0 & \alpha  \\
   0& 0 &0 &0   \\
   0 &0 & 0 & 0  \\
    \alpha & 0 & 0 & 0
 \end{array}  \right),\quad\quad
  \Lambda_4= \left( \begin{array}{cccc}
  0 &\epsilon \alpha & \alpha & 0  \\
    -\epsilon \alpha & 0 &0 & \epsilon \alpha  \\
 \alpha & 0 & 0 & \alpha  \\
   0 & \epsilon \alpha & -\alpha & 0
 \end{array}  \right).
 $$
Ricci tensors can be now deduced from the above formulas by a direct
calculation applying (2.3). In particular, the Ricci tensor in this case has the form described
in (3.6) where $\alpha\neq 0$ is a real constant and $\varepsilon=\pm1$.

We prove the existence of a vector field which determine a Ricci soliton
 in any possible case leads to a contradiction. Choose the pseudo-orthonormal basis $\{e_1,e_2,e_3,e_4\}$, and an arbitrary vector field $X = \sum_{k=1}^4x_k e_k $ and a real constant $\varsigma$, by $(1.1)$ we find that $X$ and $\varsigma$ determine a Ricci soliton if and only if the
components $x_k$ of $X$ with respect to $\lbrace e_k\rbrace$ and $\varsigma$ satisfy\\
$$
 \left\lbrace \begin{array}{cc}
 \hspace*{-39mm} -4\alpha^2+\varsigma=0 &\\
\hspace*{-36mm} -4\epsilon^2\alpha^2-\varsigma=0&\\
\hspace*{-32mm} -2\epsilon^2\alpha^2-2\alpha^2=0&\\
\hspace*{-35mm}  x_1\alpha-2x_4\alpha=0&\\
 \hspace*{-35mm}     2x_1\alpha+x_4\alpha=0&\\[2pt]
 \hspace*{-30mm}    -x_1\epsilon \alpha-2x_4\epsilon \alpha=0&\\[2pt]
  \hspace*{-32mm}   2x_1 \epsilon \alpha-x_4 \epsilon \alpha=0&\\[2pt]
    2x_2 \epsilon \alpha-2x_3 \alpha-2\epsilon^2 \alpha^2+2 \alpha^2+\varsigma=0&\\[2pt]
     2x_2 \epsilon \alpha-2x_3 \alpha+2\epsilon^2\alpha^2-2\alpha^2-\varsigma=0 &
\end{array} \right.$$
From third equation we find that $\alpha=0$ which is imposible.
\end{proof}
\section{Cases with degenerate Ricci operator and trivial isotropy}
By the arguments of the previous section, now we proceed the manifolds with degenerate Ricci operator. For Ricci parallel examples by Proposition 4.1 of \cite{Cal} it must be noted that  the conformally flat Ricci parallel homogeneous Walker spaces are one of the spaces of the Theorem \ref{CFdiag}, or admit a two step nilpotent Ricci operator.
Now, let $(M,g)$ be a not Ricci parallel (and so not locally symmetric) conformally flat homogeneous manifold with degenerate Ricci operator. First, we proceed the cases with trivial isotropy. Separating the diagonalizable Ricci operator cases, such spaces are locally isometric to a Lie group $G$, equipped with a left-invariant neutral metric, and $Q$ has one of the Segre types: $[1,(12)]$, $[1,12]$, $[22]$, $[(13)]$ and $[(1,3)]$. Also, for the Lorentzian signature, $Q$ admits the Segre types either $[(11,2)]$, or $[(1,3)]$ (see \cite{Cal}).
\begin{theorem}\label{DegTriv1}
Let $(M,g)$ be a conformally flat not Ricci-parallel four-dimensional Lie group with the Ricci operator of Segre types  $[1,(12)]$, $[22]$, $[(13)]$ and $[(1,3)]$, then $(M,g)$ is not a Ricci soliton manifold.
\end{theorem}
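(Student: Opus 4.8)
The plan is to follow exactly the template set by the proof of Theorem \ref{nondegWalker}: for each of the listed Segre types one has from \cite{Cal} an explicit Lie group $G$ with a pseudo-orthonormal basis $\{e_1,e_2,e_3,e_4\}$ and explicit structure constants, and a correspondingly explicit Ricci tensor $\varrho$. First I would, case by case, write down the Levi-Civita connection operators $\Lambda_i=\Lambda(e_i)$ from formula (2.1), confirm $\varrho$ via (2.3), and then expand the soliton equation (1.1) in the form displayed just after (2.3), namely
$$
\sum_{k=1}^4 x_k\bigl(g([e_k,e_i],e_j)+g(e_i,[e_k,e_j])\bigr)+\varrho(e_i,e_j)=\varsigma\, g_{ij},\qquad i,j=1,\dots,4,
$$
for an arbitrary $X=\sum_k x_k e_k$ and arbitrary $\varsigma\in\R$. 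This produces, for each Segre type, an overdetermined linear-plus-quadratic system in $x_1,\dots,x_4,\varsigma$ whose coefficients involve the defining constants (an $\alpha\neq0$ and signs $\varepsilon=\pm1$, and possibly further parameters in the $[1,(12)]$ and $[(1,3)]$ families).

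The key step is then purely algebraic: show each such system is inconsistent. The mechanism should be the same one exploited in Theorem \ref{nondegWalker} — among the equations coming from the pairs $(i,j)$ with $[e_k,e_i]$ and $[e_k,e_j]$ both having no component producing an $x_k$ term (equivalently, the ``pure curvature'' diagonal entries), one obtains a relation forcing a nonzero combination of the structure constants to vanish, typically $\alpha=0$ or $(1+\varepsilon^2)\alpha^2=0$, contradicting $\alpha\neq0$. Concretely I would isolate, in each case, the $(i,j)$ entry for which the bracket terms vanish identically (so the equation reads $\varrho_{ij}=\varsigma g_{ij}$ with $g_{ij}=0$ off-diagonal, or two diagonal entries whose difference eliminates $\varsigma$), and read off the contradiction directly from the explicit $\varrho$ supplied by \cite{Cal}.

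The anticipated main obstacle is not conceptual but organizational: there are five neutral types and one Lorentzian type, the $[1,(12)]$ and $[(1,3)]$ cases come in continuous families so the contradiction must be shown to persist for all parameter values (including degenerations where some coefficient vanishes), and one must be careful that the two-step nilpotent Ricci operator cases are genuinely covered by the structure data imported from \cite{Cal} rather than slipping into the Ricci-parallel class already excluded. I would therefore present the full computation for one representative type (as the authors do for case (ii) of signature $(2,2)$ in Theorem \ref{nondegWalker}), then state that the remaining types are handled by the identical procedure, exhibiting in each the single offending equation that forces $\alpha=0$. The genuinely routine part — writing out all the $\Lambda_i$ and all nine equations in every case — I would compress or relegate, keeping only the inconsistency-producing equation visible for each Segre type.
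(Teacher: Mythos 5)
Your proposal matches the paper's proof in both strategy and presentation: the authors likewise expand the soliton equation (1.1) componentwise in the explicit pseudo-orthonormal basis imported from \cite{Cal}, carry out the full computation of the $\Lambda_i$, $\varrho$ and the resulting overdetermined system only for one representative case (the type $[(22)]$ structure, with parameter $k_1$), and invoke the same argument for the remaining Segre types. The only minor difference is the contradiction mechanism you anticipate: in these degenerate, trivial-isotropy cases the Ricci tensor is a fixed matrix of $\pm 1$ entries and the inconsistency arises from incompatible linear equations in $x_1,\dots,x_4,\varsigma$ (two combinations forcing $1=0$ for every admissible value of $k_1$), rather than from an equation forcing a structure constant such as $\alpha$ to vanish as in Theorem \ref{nondegWalker}.
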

\begin{proof}
We apply the same argument used to prove Theorem $3.1$, proving that in the case  $[(22)]$  there is not any Ricci soliton. Consider the pseudo-orthonormal basis $\{e_1,e_2,e_3,e_4\}$, such that
$$
\begin{array}{cccc}
\quad&[e_1,e_2]=\frac{1-4k_1^2}{4k_1}e_2+\frac{1}{8k_1}e_4,\quad &[e_1,e_3]=\frac{1+8k_1^2}{4k_1}e_1+\frac{1+8k_1^2}{4k_1}e_3,\\ [2pt]\quad& [e_1,e_4]=\frac{1+16k_1^2}{8k_1}e_2+\frac{1+4k_1^2}{4k_1}e_4,\quad&
 [e_2,e_3]=\frac{1+4k_1^2}{4k_1}e_2+\frac{1+16k_1^2}{8k_1}e_4,\\[2pt]\quad&
[e_3,e_4]=\frac{-1}{8k_1}e_2-\frac{1-4k_1^2}{4k_1}e_4,
\end{array}
$$
for any real constant $k_1$.
Using (2.2) to compute $\Lambda_i:=\Lambda(e_i)$ for all indices i = 1, .., 4, so describing the
Levi-Civita connection of g. We get
$$
 \Lambda_1= \left( \begin{array}{cccc}
   0 &0 & \frac{1+8k_1^2}{4k_1} & 0  \\
    0 & 0 &0 & \frac{1+8k_1^2}{8k_1}  \\
  \frac{1+8k_1^2}{4k_1} & 0 & 0 & 0  \\
   0 & 0 & \frac{1+8k_1^2}{8k_1} & 0
 \end{array}  \right),
  \Lambda_2= \left( \begin{array}{cccc}
   0 & -\frac{-1+4k_1^2}{4k_1} & 0 & k_1  \\
   \frac{-1+4k_1^2}{4k_1}& 0 &\frac{1+4k_1^2}{4k_1} &0   \\
   0 &\frac{1+4k_1^2}{4k_1} & 0 & -k_1  \\
   k_1 & 0 & k_1 & 0
 \end{array}  \right),
$$
$$ 
 \Lambda_3= \left( \begin{array}{cccc}
  0 &0 & -\frac{1+8k_1^2}{4k_1} & 0  \\
    0 & 0 & 0 & -\frac{1+8k_1^2}{8k_1}  \\
  -\frac{1+8k_1^2}{4k_1} & 0 & 0 & 0  \\
   0 & 0 & -\frac{1+8k_1^2}{8k_1} & 0
 \end{array}  \right),
  \Lambda_4= \left( \begin{array}{cccc}
  0 & k_1 & 0 & -\frac{1+4k_1^2}{4k_1}  \\
   -k_1 & 0 & -k_1 & 0 \\
   0 & -k_1 & 0 & \frac{-1+4k_1^2}{4k_1}  \\
   -\frac{1+4k_1^2}{4k_1} &0 & \frac{-1+4k_1^2}{4k_1} & 0
 \end{array}  \right).
 $$
By a direct calculation the curvature and Ricci tensors can be now obtaind from formulas (2.3) and (2.4) . In particular, in this case the Ricci tensor has the form 
 $$
\varrho= \left( \begin{array}{cccc}
 1 & 0 & -1 & 0  \\
   0 & 1 & 0 & -1  \\
   -1 & 0 & 1 & 0  \\
  0 & -1 & 0 & 1
 \end{array}  \right).
$$
For an arbitrary vector field $X = \sum_{k=1}^4x_k e_k $ and a real constant $\varsigma$, by $(1.1)$ we find that $X$ and $\varsigma$ determine a Ricci soliton if and only if the
components $x_k$ of $X$ with respect to $\lbrace e_k\rbrace$ and $\varsigma$ satisfy
\begin{eqnarray}
 \hspace*{0mm} \frac{x_2}{8k_1}+\frac{x_4(1+4k_1^2)}{4k_1}=0\\
 \hspace*{0mm} -\frac{x_4(1+16k_1^2)}{8k_1}+\frac{x_2(-1+4k_1^2)}{4k_1}=0\\
 \hspace*{0mm} \frac{x_4}{8k_1}+\frac{x_2(1+4k_1^2)}{4k_1}=0\\
  \hspace*{0mm} -\frac{x_2(1+16k_1^2)}{8k_1}+\frac{x_4(-1+4k_1^2)}{4k_1}=0\\
 \hspace*{0mm}  1+\varsigma-\frac{x_1(1+8k_1^2)}{2k_1} =0\\
     \hspace*{0mm} \frac{ x_1(1+8k_1^2)}{4k_1}+\frac{x_3(1+8k_1^2)}{4k_1}=1
 \end{eqnarray}
\begin{eqnarray}
   \hspace*{0mm} -\frac{x_3(1+8k_1^2)}{2k_1}+1-\varsigma=0\\
         \hspace*{0mm} -\frac{x_1(-1+4k_1^2)}{2k_1}+1-\varsigma+\frac{x_3(1+4k_1^2)}{2k_1}=0\\
       \hspace*{0mm} -\frac{x_1(1+4k_1^2)}{2k_1}+1+\varsigma+\frac{x_3(-1+4k_1^2)}{2k_1}=0\\
 x_1 (-\frac{1}{8k_1}+\frac{(1+16k_1^2)}{8k_1})+x_3 (-\frac{1}{8k_1}+\frac{(1+16k_1^2)}{8k_1})=1
  \end{eqnarray}
The equations (4.8) and (4.9) together yield that $x_2=x_4=0$, also if we plus equations (4.11) and (4.13) together then we have $\frac{(x_1+x_3)(1+8k_1^2)}{2k_1} =2$. On the other hand, adding    (4.14) to (4.15) yield that $4k_1(x_1+x_3)=2$. Finally, from these two equations it is clear that $1=0$ which means that the equations system is not compatible.  
\end{proof}
Let $(M,g)$ be a conformally flat not Ricci-parallel four dimensional Lie group with the Ricci operator of Segre type $[(1,12)]$. Then, $(M,g)$ is locally isometric to the solvable Lie group $G=\mathbb R \ltimes \mathbb R^3$, equipped with a left invariant neutral metric, admitting a pseudo-orthonormal basis  $\{e_1,e_2,e_3,e_4\}$ for the Lie algebra, such that the 
  Lie algebra $\g$ is described by
$$\begin{array}{ll}
[e_1,e_2]=- [e_1,e_3]=-\frac{1}{2k_1}e_1-k_2e_2-k_2e_3, \quad &  [e_2,e_3]= \frac{2k_1^2+1}{2k_1}e_2+\frac{2k_1^2+1}{2k_1}e_3, \\[4 pt]
[e_2,e_4]=-[e_3,e_4]=k_3e_2+k_3e_3+k_1e_4,
\end{array}$$
for any real constants $k_1 \neq 0, k_2,k_3$.
\begin{theorem}\label{DegTriv1}
Let $(M,g)$ be a conformally flat not Ricci-parallel four-dimensional Lie group with the Ricci operator of Segre type $[1,(12)]$, then $(M,g)$ is a Ricci soliton manifold and
 this case occurs when
  $\varsigma$ is arbitrary and
 $$ 
\begin{array}{cccc}
X=\frac{k_1}{1+2k_1^2} e_2+\frac{k_1}{1+2k_1^2} e_3.
\end{array}
$$
\end{theorem}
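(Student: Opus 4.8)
The plan is to follow verbatim the computational strategy used in the proofs of Theorems~\ref{nondegWalker} and~\ref{DegTriv1}, but now the linear system will turn out to be \emph{solvable}. First I would record the Lie algebra $\g=\mathbb R\ltimes\mathbb R^3$ of Segre type $[1,(12)]$ as displayed just before the statement, with its pseudo-orthonormal basis $\{e_1,e_2,e_3,e_4\}$ and the structure constants depending on $k_1\neq 0,k_2,k_3$. Then I would use formula (2.2) to compute the Levi-Civita operators $\Lambda_i:=\Lambda(e_i)$, $i=1,\dots,4$, and from (2.3)--(2.4) deduce the Ricci tensor $\varrho$; since the Segre type is $[1,(12)]$ with a degenerate (two-step nilpotent) block, I expect $\varrho$ to have at most one nonzero eigenvalue and a nontrivial nilpotent part, so that $(M,g)$ is genuinely not Ricci-parallel, consistent with the hypothesis.

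Next, for an arbitrary invariant vector field $X=\sum_{k=1}^4 x_k e_k$ and $\varsigma\in\mathbb R$, I would write out the soliton equation (1.1) in the form displayed in Section~2,
$$
\sum_{k=1}^4 x_k\bigl(g([e_k,e_i],e_j)+g(e_i,[e_k,e_j])\bigr)+\varrho(e_i,e_j)=\varsigma\, g_{ij},\qquad i,j=1,\dots,4,
$$
obtaining a system of (at most ten) linear equations in $x_1,x_2,x_3,x_4,\varsigma$. The key step is to solve this system. I anticipate that the equations force $x_1=x_4=0$ and impose a single relation tying $x_2$ and $x_3$ together, namely $x_2+x_3=\frac{2k_1}{1+2k_1^2}$ (this is the content of the claimed answer with $x_2=x_3=\frac{k_1}{1+2k_1^2}$, which is the symmetric solution; the asymmetry $e_2\leftrightarrow e_3$ in the brackets should make any solution with $x_2+x_3$ fixed work, and one picks the symmetric representative), while $\varsigma$ remains a free parameter because the diagonal equations involving $\varsigma$ become dependent once the Ricci eigenvalue structure is taken into account. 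I would then exhibit $X=\frac{k_1}{1+2k_1^2}e_2+\frac{k_1}{1+2k_1^2}e_3$ explicitly and verify by back-substitution that every equation of the system is satisfied for all $\varsigma$, which proves that $(M,g)$ is a Ricci soliton (indeed a one-parameter family, shrinking/steady/expanding according to the sign of $\varsigma$).

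The main obstacle I foresee is purely bookkeeping: the structure constants are messy rational functions of $k_1,k_2,k_3$, so computing $\Lambda_i$ and then $\varrho$ via (2.2)--(2.4) and assembling the soliton system without arithmetic slips is delicate — in particular one must be careful that $\{e_1,e_2,e_3,e_4\}$ is pseudo-orthonormal of neutral signature, so the raising/lowering of indices and the transposes in $\psi(x)^t\circ B+B\circ\psi(x)=0$ carry the correct signs. A secondary subtlety is checking genuinely that the system is \emph{consistent for arbitrary} $\varsigma$: one should confirm that the two diagonal equations that previously (in Theorem~\ref{DegTriv1}) combined to give $1=0$ now instead combine to an identity, which is exactly where the degenerate $[1,(12)]$ block (as opposed to $[1,12]$ or $[22]$) makes the difference. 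Once the Ricci tensor is correctly in hand, verifying the proposed $X$ works is a short substitution, so I would present the computation of $\varrho$ in full and then simply display the solved system and the resulting $X$.
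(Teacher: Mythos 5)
Your overall strategy is the same as the paper's: write down the bracket relations of $\g=\mathbb R\ltimes\mathbb R^3$, compute the connection operators $\Lambda_i$ from (2.2), obtain the Ricci tensor from (2.4) (it is the rank-one matrix displayed in (4.17)), and then reduce the soliton condition (1.1) to a linear system in $x_1,\dots,x_4,\varsigma$, which is exactly the system (4.18)--(4.24) of the paper. Up to that point the plan is fine.

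The genuine gap is in what you predict the system will say, and your planned verification hinges on that prediction. You claim the system will be consistent \emph{for arbitrary} $\varsigma$ and that the solutions form a family constrained only by $x_2+x_3=\frac{2k_1}{1+2k_1^2}$, with $X$ the ``symmetric representative.'' Neither claim survives the actual computation. Among the equations one finds (in the paper's numbering) (4.19) and (4.20), namely $\frac{x_2-x_3}{k_1}=\varsigma$ and $2k_1(x_2-x_3)=\varsigma$; for generic $k_1$ (i.e.\ $2k_1^2\neq 1$) these force $x_2=x_3$ and hence $\varsigma=0$, so the soliton obtained is \emph{steady}, not a one-parameter shrinking/steady/expanding family, and your proposed back-substitution check ``for all $\varsigma$'' would fail. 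Likewise the remaining equations then give $x_1=0$ (from (4.18)), $x_4=0$ (from (4.21)), and pin down $x_2=x_3=\frac{k_1}{1+2k_1^2}$ uniquely from (4.22)--(4.23); there is no freedom of the form ``any $X$ with $x_2+x_3$ fixed.'' (You are following the theorem's wording ``$\varsigma$ is arbitrary,'' but the paper's own proof derives $\varsigma=0$, and the displayed system manifestly excludes $\varsigma\neq 0$; the statement and proof are inconsistent on this point, and the computation sides with $\varsigma=0$.) To repair your write-up you must actually solve the system rather than assert its solution structure: the conclusion should be the unique $X=\frac{k_1}{1+2k_1^2}(e_2+e_3)$ with $\varsigma=0$, together with the observation that $k_1\neq0$ rules out the Einstein case.
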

\begin{proof}
We apply (2.2) to compute $\Lambda_i:=\Lambda(e_i)$ for all indices i = 1, .., 4, so describing the
Levi-Civita connection of $g$. We get
 $$
 \Lambda_1= \left( \begin{array}{cccc}
   0 &0 & -\frac{1}{2k_1} & \frac{1}{2k_1}  \\
    \frac{1}{2k_1} & 0 &0 & 0  \\
  \frac{1}{2k_1} & 0 & 0 & 0  \\
   0 & 0 & 0 & 0
 \end{array}  \right),\quad \quad
  \Lambda_3= \left( \begin{array}{cccc}
   0 & k_2 & -k_2 & 0  \\
   -k_2 & 0 & -\frac{1+2k_1^2}{2k_1} & -k_3  \\
   -k_2 & -\frac{1+2k_1^2}{2k_1} & 0 & -k_3  \\
   0 & -k_3 & k_3 & 0
 \end{array}  \right),
$$
$$ 
 \Lambda_2= \left( \begin{array}{cccc}
   0 & -k_2 & k_2 & 0  \\
   k_2 & 0 & \frac{1+2k_1^2}{2k_1} & k_3  \\
   k_2 & \frac{1+2k_1^2}{2k_1} & 0 & k_3  \\
   0 & k_3 & -k_3 & 0
 \end{array}  \right),\quad\quad\quad
  \Lambda_4= \left( \begin{array}{cccc}
  0 & 0 & 0 & 0  \\
   0 & 0 & 0 & -k_1  \\
   0 & 0 & 0 & -k_1  \\
   0 &-k_1 & k_1 & 0
 \end{array}  \right).
 $$
Ricci tensors can be now deduced from the above formulas by a direct
calculation applying (2.4). In particular, the Ricci tensor has the form 
 \begin{eqnarray}
\varrho= \left( \begin{array}{cccc}
 0 & 0 & 0 & 0  \\
   0 & 1 & -1 & 0  \\
   0 & -1 & 1 & 0  \\
  0 & 0 & 0 & 0
 \end{array}  \right).
\end{eqnarray}
 Let  $X = \sum_{k=1}^4x_k e_k $ be an arbitrary vector field and $\varsigma$ a real constant , using  all the needed information above, we obtain that the Ricci soliton condition
(1.1) is satisfied if and only if
\begin{eqnarray}
 \hspace*{0mm} -\frac{x_1}{2k_1}+x_2k_2-x_3k_2=0\\
 \hspace*{0mm} \frac{x_2}{k_1}-\frac{x_3}{k_1}-\varsigma=0\\
 \hspace*{0mm} -2x_2k_1+2x_3k_1+\varsigma=0\\
 \hspace*{0mm} -x_2k_3+x_3k_3-x_4k_1=0\\
 \hspace*{0mm}  -2x_1k_2-2x_4k_3+1+\varsigma-\frac{x_2(1+2k_1^2)}{k_1} =0\\
     \hspace*{0mm}  -2x_1k_2-2x_4k_3+1-\varsigma-\frac{x_3(1+2k_1^2)}{k_1}=0 \\
        \hspace*{0mm}  2x_1k_2+2x_4k_3-1+\frac{x_2(1+2k_1^2)}{2k_1}+\frac{x_3(1+2k_1^2)}{2k_1} =0
      \end{eqnarray}
     Since $k_1$ is an arbitrary real number, from equations (4.19) and (4.20) we get that $\varsigma=0$. Also equations (4.22) and (4.23) yield that $x_2=x_3$. Now with these results, by (4.18), $x_1=0$ and by (4.21) $x_4=0$, also again by equations (4.22) and (4.23) we get that 
     $x_2=x_3=\frac{k_1}{(1+2k_1^2)}$. So, $X$ has the form
     $$ 
\begin{array}{cccc}
\frac{k_1}{1+2k_1^2} e_2+\frac{k_1}{1+2k_1^2} e_3.
\end{array}
$$
Since $k_1\neq 0$, no Einstein cases
occur. 
\end{proof}

\section{Cases with degenerate Ricci operator and non-trivial isotropy}
In this section we consider cases conformally flat homogeneous, not locally symmetric pseudo-Riemannian four manifold with non-trivial isotropy . For these spaces, the approach is based on the classification of four dimensional homogeneous spaces with non-trivial isotropy presented by Komrakov in \cite{ko}. In \cite{Cal}, the authors checked case by case the Komrakov's list for conformally flat not Ricci parallel (and so not locally symmetric) examples with degenerate and not diagonalizable Ricci operator.

By using the lists which are presented in \cite{Cal} for the conformally flat non-symmetric homogeneous 4-spaces with non-trivial isotropy and non-diagonalizable degenerate Ricci operator,  among them, we are able to determine some different examples
of homogeneous spaces $M = G/H$ for which equation (1.1) holds for some vector fields
$X\in \m$ and some invariant metrics which are not Einstein.

 If $(M,g)$ be a conformally flat homogeneous, not locally symmetric pseudo-Riemannian four manifold, which its Ricci operator $Q$ is degenerate and not diagonalizale, Then the possible Segre types of $Q$ is either $[22]$,$[1,12]$ or $[11,2]$ (see \cite{Cal}).
  We can now state the following classification result.
\begin{theorem}
Among conformally flat homogeneous non-locally symmetric four-dimensional pseudo-Riemannian 
non-trivial Ricci soliton with the Ricci operator of Segre types $[22]$,$[1,12]$ or $[11,2]$, the Ricci Solitons
examples are listed in the following Tables 3, 4 and  5, where the checkmark means that $X$ is invariant for all Lie algebras of that form. 
\end{theorem}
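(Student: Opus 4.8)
The plan is to proceed exactly as in the proofs of Theorems \ref{nondegWalker} and \ref{DegTriv1}: exhaust the relevant portion of the Komrakov list \cite{ko} as refined in \cite{Cal}, and for each candidate pair $(\g,\h)$ of the prescribed Segre types $[22]$, $[1,12]$, $[11,2]$, carry out the same four-step computation. First, for each such homogeneous space $M=G/H$ with its invariant metric(s), I would write down the isotropy representation and the structure constants from the Komrakov tables, then use formula (2.1) to compute the invariant Levi-Civita connection $\Lambda_i=\Lambda(e_i)$ on $\m$. Second, I would apply (2.3) and (2.4) to obtain the curvature tensor $R$ and the Ricci tensor $\varrho$ in closed form in terms of the parameters $k_1,k_2,\dots$ appearing in each family. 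Third, for an arbitrary invariant vector field $X=\sum_k x_k e_k\in\m$ and an unknown $\varsigma\in\R$, I would expand the Ricci soliton equation (1.1) in the form displayed in Section 2, obtaining a polynomial system in the $x_k$, $\varsigma$, and the structure parameters. Fourth, I would solve that system: whenever it is inconsistent the space carries no invariant Ricci soliton and is discarded, and whenever it admits a solution I record the resulting $X$ and $\varsigma$ in Tables 3, 4, 5, noting (as in Theorem \ref{DegTriv1}) when $X$ is forced to be invariant for the whole family — this is the meaning of the checkmark.

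The organizational backbone is the division into the three Segre types across both admissible signatures: type $[22]$ and $[1,12]$ occur for neutral signature $(2,2)$, while $[11,2]$ (and its degenerations) occur in the Lorentzian case, exactly as recorded in Tables 1 and 2. Within each type one must run through the finitely many Komrakov families $n.m^k\!:\!q$ that \cite{Cal} singled out as conformally flat, non-Ricci-parallel, with non-diagonalizable degenerate Ricci operator. For the cases that are Lie groups with trivial isotropy the bracket data is already recorded in Section 4 and only the soliton analysis must be redone; here the new content is the systematic treatment of the genuinely non-trivial-isotropy cases, where $\m$ is a proper complement of $\h$ in $\g$ and one must be careful that $X$ is taken in $\m$ and that all tensors are $\h$-invariant (the invariance condition $\psi(x)^t\circ B + B\circ\psi(x)=0$ from Section 2 constrains the admissible metrics and already pins down much of the structure).

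The main obstacle will be purely computational bulk rather than conceptual difficulty: each family depends on several free parameters, the Ricci soliton equation produces a sizable over-determined polynomial system, and one must argue carefully which branches are consistent. Extra care is needed to exclude Einstein (equivalently, trivial) solitons — since the ambient spaces here are not locally symmetric, conformal flatness plus Einstein would force constant sectional curvature and hence local symmetry, a contradiction, so genuinely new non-symmetric examples are precisely the ones with $X\neq 0$ not arising from an Einstein metric. A second delicate point is keeping the case analysis in bijection with the Komrakov labels used in \cite{Cal} so that Tables 3, 4, 5 are exhaustive; for completeness one should confirm that every family from those lists appears in exactly one table entry, and that the checkmark convention (invariance of $X$ across a whole bracket family) is verified in each marked row by inspecting the solved system's dependence on the parameters.
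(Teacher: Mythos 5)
Your proposal follows essentially the same route as the paper: run through the Komrakov families singled out in \cite{Cal} as conformally flat, non-Ricci-parallel with degenerate non-diagonalizable Ricci operator of types $[22]$, $[1,12]$, $[11,2]$, compute $\Lambda_i$, the Ricci tensor, and the linear system coming from (1.1) for $X=\sum_k x_k u_k\in\m$ and $\varsigma$, then tabulate the solvable cases (the paper carries this out explicitly for the representative case $1.3^1{:}5$ and records the rest in Tables 3--5). One small correction: the checkmark (invariance of $X$) is verified by checking that the solved $X$ is annihilated by the isotropy representation $\psi(\h)$ acting on $\m$ (as the paper does via the conditions defining $\h$), not by inspecting the solved system's dependence on the structure parameters.
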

\begin{proof}
As an example, we report here the calculations for the case $\bf{1.3^1.5}$. Let $M = G/H$ be a four-dimensional homogeneous
space, such that the isotropy subalgebra $\h$ is determined
by conditions (5.25) and the Lie algebra $\g$ has been determined befor. We apply (2.2) to compute $\Lambda_i:=\Lambda(e_i)$ for all indices i = 1, .., 4, so describing the
Levi-Civita connection of g. We get
$$
 \Lambda_1= \left( \begin{array}{cccc}
   0 & 0 & \frac{1}{2}\lambda & 0  \\
   0& 0 &0 &\frac{1}{2}\lambda   \\
   0 &0 & 0 & 0  \\
   0& 0 &0 &0 
 \end{array}  \right),\quad\quad
  \Lambda_2= \left( \begin{array}{cccc}
   0 & 0 & 1 &0  \\
   0& 0 &0 & 1   \\
   0 &0 & 0 & 0  \\
   0 & 0 & 0 & 0
 \end{array}  \right),
$$
$$ 
 \Lambda_3= \left( \begin{array}{cccc}
   \frac{1}{2}\lambda & 0 & \frac{c(2+\lambda^2)}{a\lambda} & \frac{c}{a}  \\
   1+\lambda^2& -\lambda & -\frac{c+c\lambda^2+b\lambda}{a} & \frac{c\lambda}{2a}   \\
   0 &0 & \lambda & 0  \\
    0 & 0 & 1+\lambda^2 & -\frac{1}{2}\lambda
 \end{array}  \right),\quad\quad
  \Lambda_4= \left( \begin{array}{cccc}
  -1 &0 & \frac{c}{a} & -\frac{2c}{a\lambda}  \\
   -\frac{1}{2}\lambda & 0 &\frac{c\lambda}{2a} & -\frac{c}{a}  \\
 0 & 0 & 0 & 0  \\
   0 & 0 & -\frac{1}{2}\lambda & 1
 \end{array}  \right).
 $$
 The curvature and Ricci tensors can be now deduced from the above formulas by a direct
calculation applying (2.2) and (2.3). Also the Ricci tensor has the form described
in (5.26). Now
Choose the pseudo-orthonormal basis ${u_1,u_2,u_3,u_4}$, and an arbitrary vector field $X = \sum_{k=1}^4x_k u_k \in \m$ and a real constant $\varsigma$, by $(1.1)$ we find that $X$ and $\varsigma$ determine a Ricci soliton if and only if the
components $x_k$ of $X$ with respect to $\lbrace u_k\rbrace$ and $\varsigma$ satisfy\\
$$
 \left\lbrace \begin{array}{cc}
\quad  a x_3=0 &\\
 -2x_1a+\frac{2\varsigma c}{\lambda}=0&\\
   x_4a+\varsigma a=0&\\
      x_1\varsigma a-\varsigma c=0&\\[2pt]
     -x_3 a(1+\lambda^2)-a\lambda x_4=0&\\[2pt]
     -a\lambda x_3-a x_4-\varsigma a=0&\\[2pt]
      -2a(1+\lambda^2)x_1+2a\lambda x_2+\frac{1}{2}\lambda^2-\varsigma b=-2&
      \end{array} \right.$$
From first equation and taking account $a\neq 0$ we find that $x_3=0$. Now by fifth equation $x_4$ must be zero and following that by third equation $\varsigma=0$. It is clear that $x_1=0$, so, 
$$ 
\begin{array}{cccc}
x_2=\frac{\lambda^2+4}{4a\lambda}.
\end{array}
$$
therefore $X$ has the form 
 $$ 
\begin{array}{cccc}
X=\frac{\lambda^2+4}{4a\lambda} u_2.
\end{array}
$$
 Since $x_2\neq 0$, no Einstein cases
occur. Finally, again by (5.25) we see at once that $X\in \m$ is invariant if and only if
$X \in Span\lbrace u_2\rbrace$. Therefore, $X$ 
is invariant and so, determines a homogeneous Ricci soliton.
\end{proof}
$\vspace*{3cm}$\\
{\small
\medskip
{\bf Table 3: Non-symmetric examples with ${\bf Q}$ of Segre type ${\bf [(22)]}$.}\nopagebreak \\[3 pt]
\begin{tabular}{!{\vrule width 1pt}p{1.1cm}|p{6.5cm}|p{3cm}|p{1cm}|p{2cm}!{\vrule width 1pt}}
\noalign{\hrule height 1pt}
Case&Invariant metric $g$&$X$&$\varsigma$&$X$ is invariant\\
\hline
$1.3^1$:$5$&$2a(-\om_1\om_4+\om_2\om_3)+\frac{2c\lambda\mu-d\lambda^2-\mu d-2c\lambda}{\mu(\mu-1)}\om_3\om_3\newline+2c\om_3\om_4+d\om_4\om_4$&$\frac{\mu(\mu-2)}{4a} u_1-\frac{\lambda \mu}{4a} u_2$&0 &\cmark\\
\hline
$1.3^1$:$28$&$2a(-\om_1\om_4+\om_2\om_3)+2\frac{1}{8\varsigma}\om_3\om_3+\frac{1}{8\varsigma}\om_4\om_4$&$x_1 u_1-\frac{7}{8a} u_2+\varsigma u_3$& $\neq 0$ & $\Leftrightarrow$ $x_1=0$\\
\hline
$1.3^1$:$29$&$2a(-\om_1\om_4+\om_2\om_3)+2\frac{1}{8\varsigma}\om_3\om_3-\frac{1}{8\varsigma}\om_4\om_4$&$x_1 u_1-\frac{7}{8a} u_2+\varsigma u_3$&  $\neq 0$ & $\Leftrightarrow$ $x_1=0$\\
\hline
$1.3^1$:$30$&$\begin{array}{c}$
$(1) \hspace*{2mm}  2a(-\om_1\om_4+\om_2\om_3)+b(\lambda^2-
\lambda)\om_3\om_3\\
-(d\lambda-d)\om_3\om_4
+d\om_4\om_4\\
(2) \hspace*{2mm} 2a(-\om_1\om_4+\om_2\om_3)$ $+b\om_3\om_4
+d\om_4\om_4\\
(3) \hspace*{2mm} 2a(-\om_1\om_4+\om_2\om_3)+b(\mu^2+
\mu)\om_3\om_3\\-(b\mu-d\mu-d-b)\om_3\om_4
+d\om_4\om_4$ $\end{array}$
&$\begin{array}{c}$
$\hspace*{-2mm}\frac{\lambda^2-1}{4a\lambda} u_2\\
\\
\hspace*{-2mm}x_1 u_1+\frac{b}{4ad} u_2-\frac{1}{2d} u_3\\
\hspace*{-2mm} \frac{-1+\mu^2}{4a}(u_1-u_2)\\
  \end{array}$
&$\begin{array}{c}
0\\
\\
 -\frac{1}{2d} \\ 0\\
\end{array}$&$\begin{array}{c}
$ $\hspace*{-7mm}$\cmark$\\$ $
\\
 \hspace*{-2mm}\Leftrightarrow$ $x_1=0 \\ \hspace*{-7mm}$ \cmark$\\$ $
\end{array}$\\
\noalign{\hrule height 1pt}
\end{tabular}
\newpage
}
{\small\medskip
{\bf Table 4: Non-symmetric examples with $\bf{Q}$ of Segre type $\bf{[(1,12)]}$.} \nopagebreak\\[3 pt]
\begin{tabular}{!{\vrule width 1pt}p{1cm}|p{5.5cm}|p{3.8cm}|p{1.3cm}|p{1.3cm}!{\vrule width 1pt}}
\noalign{\hrule height 1pt}
Case&Invariant metric $g$&$X$&$\varsigma$&$X$ is invariant\\
\hline
$1.1^1$:$1$ &$2a\om_1\om_3+2c\om_2\om_4+d\om_4\om_4$ &$\begin{array}{c}$ $\hspace*{-4mm}(1)\hspace*{1mm}\frac{-a^2+c^2+2\varsigma d a^2}{4a^2 c} u_2-\varsigma u_4\\ \hspace*{-19mm} (2)\hspace*{1mm} \frac{\varsigma d}{2a} u_2-\varsigma u_4$ $\end{array}$ & $\hspace*{-1mm}[-\infty,\infty]$& $\begin{array}{c}
$ $\hspace*{0mm}$\cmark$\\$ $  \hspace*{0mm}$ \cmark $
\end{array}$\\ \hline
$1.1^1$:$2$ &$\begin{array}{c}$ $\hspace*{-4mm}(1)\hspace*{1mm} 2a\om_1\om_3+2c\om_2\om_4+d\om_4\om_4,\hspace*{1mm} \lambda=0\\ 
\hspace*{-13mm} (2)\hspace*{1mm}2a\om_1\om_3+2c\om_2\om_4+d\om_4\om_4$ $\end{array}$& $\begin{array}{c}$ $\hspace*{-4mm}(1)\hspace*{1mm}x_1 u_1+\frac{1+2\varsigma d}{4 c} u_2+\varsigma u_4\\ \hspace*{-11mm} (2)\hspace*{1mm}x_1 u_1+\frac{-1+2\lambda }{4 c\lambda} u_2$ $\end{array}$ &$\begin{array}{c}
$ $\hspace*{-4mm}[-\infty,\infty]\\$ $  \hspace*{0mm}0$  $
\end{array}$& $\begin{array}{c}
$ $\hspace*{-4mm}\Leftrightarrow$ $x_1=0\\$ $  \hspace*{-4mm}$ \cmark $
\end{array}$\\
 \hline
$1.3^1$:$5$ &$2a(-\om_1\om_4+\om_2\om_3)+b\om_3\om_3+2c\om_3\om_4-\frac{2c}{\lambda}\om_4\om_4, \hspace*{12mm} \mu=0$ &$\frac{\lambda^2+4}{4a\lambda} u_2$&$  \hspace*{6mm}0$&$  \hspace*{4mm}$ \cmark \\
\hline
$1.3^1$:$7$&$2a(-\om_1\om_4+\om_2\om_3)+b\om_3\om_3+2c\om_3\om_4-2c\om_4\om_4$ &$\frac{-1-2\lambda \mu+\mu^2+\lambda^2}{4a} u_1$&$  \hspace*{6mm}0$&$  \hspace*{4mm}$ \cmark \\ \hline
$1.3^1$:$12$ &
$\begin{array}{c}
\hspace*{-7mm}(1)\hspace*{3mm} 2a(-\om_1\om_4+\om_2\om_3)+b\om_3  \om_3\\+2c\om_3\om_4+d\om_4\om_4\\
 \hspace*{-7mm} (2)\hspace*{3mm}2a(-\om_1\om_4+\om_2\om_3)+b\om_3  \om_3\\+2c\om_3\om_4+
d\om_4\om_4,\hspace*{1mm} \lambda=0,\hspace*{1mm} \mu=0\\$
$\hspace*{-7mm} (3)\hspace*{3mm}2a(-\om_1\om_4+\om_2\om_3)+b\om_3  \om_3\\+
d\om_4\om_4,\hspace*{1mm} \lambda=0\\$
$\hspace*{-7mm} (4)\hspace*{3mm}2a(-\om_1\om_4+\om_2\om_3)+b\om_3  \om_3\\+2c\om_3\om_4+
d\om_4\om_4,\hspace*{1mm} \lambda=1-\mu\\$
$\hspace*{-7mm}(5)\hspace*{3mm}2a(-\om_1\om_4+\om_2\om_3)+b\om_3  \om_3\\+2c\om_3\om_4+
d\om_4\om_4,,\hspace*{1mm} \mu=\frac{1}{2}\\$
$\hspace*{-7mm}(6)\hspace*{3mm}2a(-\om_1\om_4+\om_2\om_3)+b\om_3  \om_3\\+2c\om_3\om_4+
d\om_4\om_4,\hspace*{1mm} \lambda=0,\hspace*{1mm} \mu=\frac{1}{2}$
$\end{array}$& 
$\begin{array}{c}$ $\hspace*{-8mm}(1)\hspace*{1mm} \frac{-1-2\lambda \mu+\mu^2+\lambda^2}{4a} u_1\\
$ $\\$ $
\hspace*{-7mm} (2)\hspace*{1mm}\frac{-1+2\varsigma d}{4a}u_1+x_2 u_2\\
+\varsigma u_4\\$
$\hspace*{-2mm}(3)\hspace*{1mm}\frac{-1+\mu^2+2\varsigma d}{4a}u_1+x_2 u_2\\
+\varsigma u_4\\$
$\hspace*{-15mm}(4)\hspace*{1mm}\frac{\mu(-1+\mu^2)}{a} u_1\\$
$\\$
$\hspace*{-12mm}(5)\hspace*{1mm}\frac{-4\lambda-3+4\lambda^2}{16a} u_1\\$
$\\$
$\hspace*{-8mm}(6)\hspace*{1mm}\frac{3+8\varsigma d}{16a}u_1+x_2 u_2\\
-\varsigma u_4$
$\end{array}$ &
$\begin{array}{c}$ $\hspace*{-8mm}\hspace*{1mm} 0\\$
$ \\ $ $
\hspace*{-4mm} \hspace*{1mm}[-\infty,\infty]\\$
$\\$
$\hspace*{-4mm}\hspace*{1mm}[-\infty,\infty]\\
$ $\\$ $
\hspace*{-8mm}\hspace*{1mm}0\\$
$\\$
$\hspace*{-8mm}\hspace*{1mm}0\\$
$\\$
$\hspace*{-8mm}\hspace*{1mm}0$
$\end{array}$ 
&$\begin{array}{c}$ $\hspace*{-6mm}\hspace*{1mm}$  \cmark $ \\$
$ \\ 
\hspace*{-4mm} \hspace*{1mm}\Leftrightarrow$ $x_2=0\\$
$\\$
$\hspace*{-5mm}\hspace*{1mm}\Leftrightarrow$ $x_2=0\\
$ $\\
\hspace*{-6mm}\hspace*{1mm}$ \cmark $\\$
$\\$
$\hspace*{-6mm}\hspace*{1mm}$  \cmark $\\$
$\\$
$\hspace*{-5mm}\hspace*{1mm}\Leftrightarrow$ $x_2=0$
$\end{array}$ 
 \\ \hline
$1.3^1$:$19$ &$2a(-\om_1\om_4+\om_2\om_3)+2c\om_3\om_4+d\om_4\om_4$&$
\frac{1}{4a} u_1$&$  \hspace*{6mm}0$&$  \hspace*{4mm}$ \cmark \\ \hline
{$1.3^1$:$21$}$\vphantom{\displaystyle\frac{A^a}{A^a}}$&$2a(-\om_1\om_4+\om_2\om_3)+2c\om_3\om_4+d\om_4\om_4$&
$\frac{(-2+\lambda)\lambda}{4a} u_1$&$  \hspace*{6mm}0$&$  \hspace*{4mm}$ \cmark \\ \hline
$1.3^1$:$30$ &
$\begin{array}{c}
\hspace*{-6mm} (1)\hspace*{3mm}2a(\om_2\om_3-\om_1\om_4)+b\om_3\om_3\\+{b(1-\mu)}\om_3\om_4+d\om_4\om_4,\hspace*{1mm} \lambda=1\\$
$\hspace*{-8mm} (2)\hspace*{3mm}2a(\om_2\om_3-\om_1\om_4)+b\om_3\om_3\\+{b(1-\mu)}\om_3\om_4-\frac{1}{2\varsigma}\om_4\om_4,\\ \hspace*{1mm} \lambda=1,\hspace*{1mm}\mu=0\\$
$\hspace*{-6mm}(3)\hspace*{3mm}2a(\om_2\om_3-\om_1\om_4)-\frac{1}{2\varsigma}\om_3\om_3\\+{b(1-\mu)}\om_3\om_4+d\om_4\om_4,\\  \hspace*{1mm} \lambda=0,\hspace*{1mm}\mu=1\\$
$\hspace*{-8mm}(4)\hspace*{3mm}2a(\om_2\om_3-\om_1\om_4)+b\om_3\om_3\\+{b(1-\mu)}\om_3\om_4+d\om_4\om_4,\hspace*{1mm} \mu=1$
$\end{array}$& 
$\begin{array}{c}$ $\hspace*{-18mm}(1)\hspace*{1mm}\frac{-\mu^2+1}{4a\mu} u_1 \\
$ $\\
\hspace*{-1mm} (2)\hspace*{1mm} x_1 u_1+\frac{b}{4ad}u_2+\varsigma u_3\\$ $\\$ $\\$
$\hspace*{-3mm}(3)\hspace*{1mm}\frac{\varsigma d}{2a}u_1+x_2 u_2+\varsigma u_4\\$ $\\$ $\\$
$\hspace*{-20mm}(4)\hspace*{1mm}\frac{\lambda^2-1}{4a\lambda} u_2\\$
$\end{array}$ &
$\begin{array}{c}$ $\hspace*{-3mm}\hspace*{1mm} 0\\$
$ \\ $ $
\hspace*{-4mm} \hspace*{1mm}[-\infty,\infty]\\$
$\\$ $\\$
$\hspace*{-4mm}\hspace*{1mm}[-\infty,\infty]\\
$ $\\$ $\\
\hspace*{-3mm}\hspace*{1mm}0\\$
$\end{array}$ 
&$\begin{array}{c}$ $\hspace*{-6mm}\hspace*{1mm}$  \cmark $ \\$
$ \\ 
\hspace*{-4mm} \hspace*{1mm}\Leftrightarrow$ $x_1=0\\$
$\\$ $\\$
$\hspace*{-5mm}\hspace*{1mm}\Leftrightarrow$ $x_2=0\\
$ $\\$ $\\
\hspace*{-6mm}\hspace*{1mm}$ \cmark $\\$
$\end{array}$ 
 \\
\hline
$1.4^1$:$10$ &$a(-2\om_1\om_3+\om_2\om_2)+b\om_3\om_3+2c\om_3\om_4+d\om_4\om_4,\quad ad<0$, $\hspace*{1mm}r=p^2+p$& $\frac{p(p+1)}{a} u_1$ &$  \hspace*{6mm}0$&$  \hspace*{4mm}$ \cmark \\ \hline
$2.2^1$:$2$ $\vphantom{\displaystyle\frac{A^a}{A^a}}$&${2a(\om_1\om_3+\om_2\om_4)+b\om_2\om_2}$&$\frac{p^2-4}{4ap} u_4$&$  \hspace*{6mm}0$&$  \hspace*{4mm}$ \cmark \\ \hline
$2.2^1$:$3$ $\vphantom{\displaystyle\frac{A^a}{A^a}}$&${2a(\om_1\om_3+\om_2\om_4)+b\om_2\om_2}$& $x_1 u_1+\varsigma u_2+\frac{-1+2\varsigma b}{4a} u_4$ &$\hspace*{-1mm} \hspace*{1mm}[-\infty,\infty]$&$\hspace*{-3mm}\hspace*{1mm}\Leftrightarrow$ $x_1=0$\\
\hline
$2.5^1$:$4$ $\vphantom{\displaystyle\frac{A^a}{A^a}}$&${2a(\om_1\om_3+\om_2\om_4)+b\om_3\om_3}$&$\frac{2h-h^2+4P}{4a} u_1$&$  \hspace*{6mm}0$&$  \hspace*{4mm}$ \cmark \\ \hline
$3.3^1$:$1$ $\vphantom{\displaystyle\frac{A}{A^a}}$&${2a(\om_1\om_3+\om_2\om_4)+b\om_3\om_3}$&$\frac{p}{a} u_1$&$  \hspace*{6mm}0$&$  \hspace*{4mm}$ \cmark \\ \hline
\noalign{\hrule height 1pt}
\end{tabular}
\newpage
}
{\small
\smallskip
{\bf Table 5: Non-symmetric examples with ${\bf Q}$ of Segre type ${\bf [(11,2)]}$.}\nopagebreak\\[3 pt]
\begin{tabular}{!{\vrule width 1pt}p{1cm}|p{5.5cm}|p{3.8cm}|p{1.3cm}|p{1.3cm}!{\vrule width 1pt}}
\noalign{\hrule height 1pt}
Case&Invariant metric $g$&Vector field $X$&$\varsigma$&$X$ is invariant\\
\hline
$1.1^2$:$1$&$c(\om_1\om_1+\om_3\om_3)+2b\om_2\om_4+d\om_4\om_4,\hspace*{1mm}p=2$&$\frac{4c^2+b^2-2\varsigma dc^2}{8c^2 b} u_2+\frac{1}{2}\varsigma u_4$& $  \hspace*{1mm}[-\infty,\infty]$&$  \hspace*{4mm}$ \cmark \\ \hline
$1.1^2$:$2$ $\vphantom{\displaystyle\frac{A^a}{A^a}}$&
$\begin{array}{c}$ $\hspace*{-3mm}(1)\hspace*{3mm}c(\om_1\om_1+\om_3\om_3)+2b\om_2\om_4\\+d\om_4\om_4,\hspace*{3mm}p=2\\ \hspace*{-3mm} (2)\hspace*{3mm}c(\om_1\om_1+\om_3\om_3)+2b\om_2\om_4\\+d\om_4\om_4$ $\end{array}$
&$\begin{array}{c}$ $\hspace*{-3mm}(1)\hspace*{3mm}\frac{2-\varsigma d}{4 b} u_2+\frac{1}{2}\varsigma u_4\\ $ $\\ \hspace*{-5mm} (2)\hspace*{3mm}\frac{p-1}{bp}u_2+\varsigma u_3\\$ $\end{array}$ &$\begin{array}{c}$ $\hspace*{-3mm}[-\infty,\infty]\\ $ $\\ \hspace*{-3mm}0\\$ $\end{array}$ &$\begin{array}{c}
$ $\hspace*{0mm}$\cmark$\\$ $ \\ \hspace*{0mm}$ \cmark $\\$ $
\end{array}$\\ \hline
$1.4^1$:$9$ &$a(-2\om_1\om_3+\om_2\om_2)+b\om_3\om_3+2c\om_3\om_4
{-\frac{a(4r+1)}{4}\om_4\om_4},\hspace*{1mm}p=-\frac{1}{2}$&$\frac{-3+4r}{16a} u_1$&$  \hspace*{6mm}0$&$  \hspace*{4mm}$ \cmark \\ \hline
$1.4^1$:$10$ $\vphantom{\displaystyle\frac{A^a}{A}}$ &$a(-2\om_1\om_3+\om_2\om_2)+b\om_3\om_3+2c\om_3\om_4\newline+d\om_4\om_4,\quad ad>0$&$\frac{p(1+p)}{a} u_1$&$  \hspace*{6mm}0$&$  \hspace*{4mm}$ \cmark \\ \hline
$2.5^2$:$2$ $\vphantom{\displaystyle\frac{A^a}{A^a}}$&$2a\om_1\om_3+a(\om_2\om_2+\om_4\om_4)+b\om_3\om_3$&$\frac{r^2+p}{a} u_1$&$  \hspace*{6mm}0$&$  \hspace*{4mm}$ \cmark \\ \hline
$3.3^2$:$1$ $\vphantom{\displaystyle\frac{A^a}{A^a}}$&$2a\om_1\om_3+a(\om_2\om_2+\om_4\om_4)+b\om_3\om_3$&$\frac{p}{a} u_1$&$  \hspace*{6mm}0$&$  \hspace*{4mm}$ \cmark \\ \hline
\noalign{\hrule height 1pt}
\end{tabular}
}


\begin{thebibliography}{00}
\bibitem{PB} P. Baird and L. Danielo, Three-dimensional Ricci solitons which project
to surfaces, J. Reine Angew. Math. 608 (2007), 65–91.
\bibitem{Ba} W. Batat, K. Onda, Algebraic Ricci Solitons of four-dimensional pseudo-
Riemannian generalized symmetric spaces, arXiv:1112.5778v1.
\bibitem{BA} W. Batat, K. Onda, Algebraic Ricci Solitons of three-dimensional Lorentzian
Lie groups, arXiv:1112.2455v2.
\bibitem{Bell} S. Bellucci, A. V. Glajinsky and E. Latini, Making the hyper-K$\ddot{a}$hler structure of $N = 2$ quantum string manifest, Phys. Rev. D 70 (no. 2) (2004).
\bibitem{Gar} M. Brozos-Vazquez, E. Garcia-Rio, P. Gilkey, S. Nikvcevic, R. Vazquez-Lorenzo, The Geometry of Walker Manifolds, A Publication in the Morgan \& Claypool Publishers series, 2009.
\bibitem{Cal} G.Calvaruso, A. Zaeim, {\em Conformally flat homogeneous pseudo-Riemannian four-manifolds}, Tohoku math. Jour., to appear.
\bibitem{Ca} Calvaruso, Homogeneous structures on three dimensional Lorentzian
Lie manifolds, J. Geom. Phys. 57 (2007), 1279–1291.
\bibitem{Ca1}G. Calvaruso, R. A. Marinosci, Homogeneous geodesics of three dimensional
unimodular Lorentzian Lie groups, Mediterr. J. Math. 3 (2006),
467–481.
\bibitem{Ca2}G. Calvaruso, R. A. Marinosci, Homogeneous geodesics of non unimodular
Lorentzian Lie groups and naturally Lorentzian spaces in dimension three,
Adv. Geom. 8 (2008), 473–489.
\bibitem{C1} H. D. Cao and Q. Chen; On locally conformally flat steady gradient Ricci solitons, Trans.
Amer. Math. Soc., to appear.
\bibitem{Dia} J. C. Diaz-Ramos, E. Garcia-Rio and R. Vazquez-Lorenzo, Four dimensional
Osserman metrics with nondiagonalizable Jacobi operators, J. Geom. Anal. 16
(2006), 39–52.
\bibitem{DH} A. Dancer, S. Hall, M. Wang, Cohomogeneity One Shrinking Ricci Solitons: An Analytic and Numerical Study, Asian J. Math., in press (arXiv).
\bibitem{C2} M. Fernandez-Lopez and E. Garcia-Rio; Rigidity of shrinking Ricci solitons, Math. Z., to appear.
\bibitem{F} J. Figueroa O Farrill, P. Meessen, S. Philip, Supersymmetry and homogeneity
of M-theory backgrounds, Class. Quant. Grav. 22 (1) (2005), 207–
226.
\bibitem{JI1} J. Isenberg and Martin Jackson, Ricci flow of locally homogeneous geome-
tries on closed manifolds, J. Differential Geom. 35 (1992), no. 3, 723–741.
\bibitem{JI2} J. Isenberg, Martin Jackson, and Peng Lu, Ricci flow on locally homogeneous
closed 4-manifolds, Comm. Anal. Geom. 14 (2006), no. 2, 345–386.
\bibitem{KI} C. Guenther, J. Isenberg, and Dan Knopf, Linear stability of homo-
geneous Ricci solitons, Int. Math. Res. Not. (2006), Art. ID 96253, 30.
\bibitem{Ho}
K. Honda and K. Tsukada, {\em Three-dimensional conformally flat homogeneous Lorentzian manifolds}, J.
Phys. A: Math. Theor., \textbf{40} (2007), 831--851.
\bibitem{KO} B. Komrakov Jnr, Four-dimensional pseudo-Riemannian homogeneous spaces. Classification of real
pairs, Preprint University of Oslo, 32, 1995.
\bibitem{ko} B. Komrakov Jnr, Einstein-Maxwell equation on four-dimensional homogeneous spaces, Lobachevski J. Math., 8 (2001), 33-165.
\bibitem{Lm} R. Lafunte, J. Lauret, On homogeneous Ricci solitons, arXiv:1210.3656v1
\bibitem{LM} R. Lafunte, J. Lauret, Structure of homogeneous Ricci solitons and the Alekseevskii conjectur, arXiv:1212.6511v1.
\bibitem{LJ} J. Lauret, Ricci soliton solvmanifolds, J. reine angew. Math., 650 (2011), 1–21.
\bibitem{LO} J. Lott, On the long-time behavior of type-III Ricci flow solutions, Math.
Ann. 339 (2007), no. 3, 627–666.
\bibitem{M} M. A. Magid, Indefinite Einstein hypersurfaces with nilpotent shape operators,
Hokkaido Math. J. 13 (1984), 241–250.
\bibitem{MB} M. Brozos-vazquez, E. Garcia-Rio, S. Gavino-Fernandez, Locally conformally flat Lorentzian gradient Ricci solitons, arXiv:1106.2924v1.
\bibitem{TP} T. L. Payne, The Ricci flow for nilmanifolds, J. Mod. Dyn. 4 (2010), no. 1,
65–90. MR 2643888 (2011d:53163)
\bibitem{Me} P. Meessen, Homogeneous Lorentzian spaces whose null-geodesics are
canonically homogeneous, Lett. Math. Phys. 75 (2006), 209–212.
\bibitem{PP} P. Petersen and W. Wylie, On gradient Ricci solitons with symme-
try, Proc. Amer. Math. Soc. 137 (2009), no. 6, 2085–2092. MR 2480290
(2010a:53073)
 \bibitem{Ry}
P. Ryan, {\em A note on conformally at spaces with constant scalar curvature}, Proc. 13th Biennal Seminar
of the Canadian Math. Congress Diff. Geom. Appl., Dalhousie Univ., Halifax, 1971, \textbf{2} (1972), 115--124.
\bibitem{Ta}
H. Takagi, {\em Conformally at Riemannian manifolds admitting a transitive group of isometries, I,II},
Toh\^oku Math. J., \textbf{27}, (1975), 103--110 and 445--451.

 
\end{thebibliography}
\end{document}